\newcommand{\hide}[1]{}
\newcommand{\comm}[1]{\marginpar{\tiny #1}}
\newcommand{\ignore}[1]{}
\newcommand{\C}{\mathbb{C}}
\newcommand{\Z}{\mathbb{Z}}
\newcommand{\Q}{\mathbb{Q}}
\newcommand{\F}{\mathbb{F}}
\newcommand{\Legendre}[2]{\genfrac{(}{)}{}{}{#1}{#2}}
\newcommand{\tLegendre}[2]{\genfrac{(}{)}{}{1}{#1}{#2}}
\def\Li{{\rm Li}}
\def\aa{\alpha}
\def\bb{\beta}
\newtheorem{dummy}{Dummy}
\newtheorem{lemma}[dummy]{Lemma}
\newtheorem{theorem}[dummy]{Theorem}
\newtheorem*{theorem*}{Theorem}
\newtheorem{prop}[dummy]{Proposition}
\theoremstyle{definition}
\theoremstyle{remark}
\newtheorem*{rem*}{Remark}
\begin{document}
\bibliographystyle{plain}

\author{Sandro Mattarei}
\email{\tt smattarei@lincoln.ac.uk}
\address{School of Mathematics and Physics,
University of Lincoln, Brayford Pool,
Lincoln, LN6 7TS, United Kingdom}

\author{Roberto Tauraso}
\email{tauraso@mat.uniroma2.it}
\address{Dipartimento di Matematica,
Universit\`a di Roma ``Tor Vergata'',
via della Ricerca Scientifica,
00133 Roma, Italy}

\title{From generating series to polynomial congruences}

\begin{abstract}
Consider an ordinary generating function $\sum_{k=0}^{\infty}c_kx^k$,
of an integer sequence of some combinatorial relevance, and assume that it admits a closed form $C(x)$.
Various instances are known where the corresponding truncated sum $\sum_{k=0}^{q-1}c_kx^k$,
with $q$ a power of a prime $p$, also admits a closed form representation when viewed modulo $p$.
Such a representation for the truncated sum modulo $p$ frequently bears a resemblance with the shape of $C(x)$,
despite being typically proved through independent arguments.
One of the simplest examples is the congruence
$\sum_{k=0}^{q-1}\binom{2k}{k}x^k\equiv(1-4x)^{(q-1)/2}\pmod{p}$
being a finite match for the well-known generating function
$\sum_{k=0}^\infty\binom{2k}{k}x^k=
1/\sqrt{1-4x}$.

We develop a method which allows one to directly infer the closed-form representation of the truncated sum
from the closed form of the series for a significant class of series
involving central binomial coefficients.
In particular, we collect various known such series whose closed-form representation involves polylogarithms
$\Li_d(x)=\sum_{k=1}^{\infty}x^k/k^d$, and after supplementing them with some new ones
we obtain closed-forms modulo $p$ for the corresponding truncated sums,
in terms of finite polylogarithms
$\pounds_d(x)=\sum_{k=1}^{p-1}x^k/k^d$.
\end{abstract}

\keywords{congruences; binomial coefficients; harmonic numbers; polylogarithms; generating functions}
\subjclass[2010]{Primary 11A07; secondary 05A10, 11B83}

\maketitle

\thispagestyle{empty}

\section{Introduction}

There is a vast and expanding literature on evaluating sums of combinatorial numbers such as binomial coefficients modulo a prime $p$,
or a power of a prime.
Among the simplest examples are
\begin{equation}\label{eq:sums_mod_p}
\sum_{k=0}^{q-1}\binom{2k}{k}\equiv
\Legendre{q}{3}\pmod{p},
\quad\text{and}\quad
\sum_{k=0}^{q-1}C_k\equiv
\frac{3\Legendre{q}{3}-1}{2}\pmod{p},
\end{equation}
where $q$ is a power of a prime $p$,
\[
C_k=\frac{1}{k+1}\binom{2k}{k}=\binom{2k}{k}-\binom{2k}{k+1}
\]
denotes the $k$th {\em Catalan number},
and $\Legendre{a}{3}$ is a Legendre symbol.
More general forms of congruences~\eqref{eq:sums_mod_p}, and variations, were established in~\cite[Theorem~1.2 and Corollary~1.3]{PanSun},
and later extended in various directions by several authors,
see for example~\cite[Theorem~1.1]{Sun:BCL},
but in this Introduction we take the simple formulations in Equations~\eqref{eq:sums_mod_p}
to illustrate the general principle which informs this paper.

Many such congruences happen to be specializations of more general polynomial congruences, such as
\begin{equation}\label{eq:central_pol}
\sum_{k=0}^{q-1}\binom{2k}{k}x^k\equiv(1-4x)^{(q-1)/2}\pmod{p}
\end{equation}
in case of the former of Equations~\eqref{eq:sums_mod_p},
see Theorem~\ref{thm:central_pol} below,
which can then easily be recovered by specializing at $x=1$.
Such polynomial formulations, when they exist, are manifestly superior to their numerical counterparts for other reasons
beyond allowing specialization at any integer values for $x$, or indeed algebraic integers or even $p$-integral algebraic numbers,
the most compelling being the possibility of producing other congruences by differentiation or integration, two workhorses of the
{\em generating functions} arsenal.

Equation~\eqref{eq:central_pol} bears a strong resemblance to the well-known power series identity
\begin{equation}\label{eq:central_gf}
\sum_{k=0}^\infty\binom{2k}{k}x^k
=
\frac{1}{\sqrt{1-4x}},
\end{equation}
which exhibits the {\em generating function} for the
{\em central binomial coefficients} $\binom{2k}{k}=(-4)^{k}\binom{-1/2}{k}$.
We contend that the most insightful way to prove congruences such as Equation~\eqref{eq:central_pol}
is to derive them from corresponding power series identities, in this case Equation~\eqref{eq:central_gf}.
In this paper we show that this indeed possible in a variety of cases where such similarities occur.

This quite general procedure, which we may call {\em truncation and reduction modulo $p$,}
may be achieved in simple cases by little more than an appropriate application of the congruence $(1+x)^{q}\equiv 1+x^q\pmod{p}$.
Incidentally, this also justifies why the range $0\le k<q$ generally appears to be a natural summation range to consider for the truncated sums.
Equation~\eqref{eq:central_pol} and its analogue for Catalan numbers will be obtained in this way in Section~\ref{sec:original}.
In Section~\ref{sec:shifted} we will show that {\em shifted} versions of Equations~\eqref{eq:sums_mod_p} given in~\cite{PanSun}, such as
\begin{equation}\label{eq:2k_k+d}
\sum_{k=0}^{q-1}\binom{2k}{k+d}\equiv
\Legendre{q-d}{3}\pmod{p}
\end{equation}
in case of the former, for any $0\le d<q$, are also specializations of polynomial congruences,
which can be similarly obtained from corresponding power series identities by truncation and reduction modulo $p$.

We devote the rest of the paper to more sophisticated polynomial congruences which can be obtained by our general method.
We focus on a class of series and sums, involving binomial coefficients and generalized harmonic numbers,
whose closed forms require polylogarithms and their finite analogues.
We describe here only the simplest illustrative example of our results which comes from the known generating function
\[
\sum_{k=1}^{\infty}\binom{2k}{k}\frac{x^k}{k}=
-2\log\left(\frac{1+\sqrt{1-4x}}{2}\right)
=2\Li_1(\beta),
\]
where $\beta=(1-\sqrt{1-4x})/2$ and
$\Li_d(x)=\sum_{k=1}^{\infty}x^k/k^d$
denote polylogarithms.
An application of our method of truncation and reduction modulo $p$, where $p$ is an odd prime, produces the congruence
\[
\sum_{k=1}^{p-1}\binom{2k}{k}\frac{x^k}{k}\equiv
\pounds_1(\beta)+\pounds_1(1-\beta)\pmod{p},
\]
where
$\pounds_d(x)=\sum_{k=1}^{p-1}x^k/k^d$
denotes a {\em finite polylogarithm,} see Section~\ref{sec:polylog} for further details.
This congruence is Equation~\eqref{N1} in Theorem~\ref{thm:2kk}.
Note that the two polylogarithms at the right-hand side of this congruence should be viewed as power series in $x$, but because of cancellation their sum modulo $p$
turns out to be a polynomial in $x$.
One may also view $\beta$ as the principal indeterminate
with $x=\beta(1-\beta)$ defined in terms of it.
We will adopt this point of view, so all our congruences will be between polynomials rather than power series,
and our proofs will run much smoother.

A similar truncation procedure can be applied to certain series of the form
$\sum_{k=1}^{\infty}\binom{2k}{k}x^k/k^d$
with higher $d$, but those naturally fit in a wider class of series which admit closed forms involving polylogarithms.
Roughly speaking, the series we consider here are generating series of sequences of the general form
$\binom{2k}{k}a_k$
or
$C_ka_k$,
where $a_k$ might be $1/k^d$, or a {\em generalized harmonic number}
$H_k^{(d)}=\sum_{j=1}^{k}1/j^d$
or possibly a linear combination of products of them.
Such series can be conveniently sorted by their {\em level,} which is the name we give to
the highest power of $k$ occurring in the denominator of the expression $a_k$ once expanded.
Thus, the series considered so far in this introduction have level zero or one.
In Section~\ref{sec:series} we exhibit evaluations in closed form for several series of level up to three,
quoting several from the literature and producing some new ones.
The closed forms of series of level $d$ involve polylogarithms $\Li_d$ (and possibly of lower level).
As an illustrative example we mention
\begin{equation*}
\sum_{k=1}^{\infty}C_kH_k^{(2)}x^{k+1}=
2\beta\,\Li_2\left(\beta\right)
-(1-\beta)\Li_1(\beta)^2,
\end{equation*}
which is our Equation~\eqref{S3} and gives a closed form for a series of level two.

In Section~\ref{sec:polynomials} we apply truncation and reduction modulo $p$ to all series of level up to three
considered in Section~\ref{sec:series}.
In the example mentioned above the identity for that series of level two leads to the polynomial congruence
\begin{equation*}
\sum_{k=1}^{p-1}C_k H_k^{(2)}x^{k+1}
\equiv 2\beta\pounds_2(\beta)+2(1-\beta)\pounds_2(1-\beta)
\pmod{p},
\end{equation*}
for $p>3$,
which is our Equation~\eqref{C11}.
The right-hand side of this congruence, as well as that of the congruence
introduced earlier, and all other congruences we produce in Section~\ref{sec:polynomials},
exhibits a symmetry with respect to interchanging $\beta$ and $1-\beta$.
This symmetry, which is a necessity as the left-hand side is invariant with respect to this substitution,
has no counterpart for the generating functions of the corresponding power series,
where this substitution would not even make sense.

The algebraic manipulations of Section~\ref{sec:polynomials} will require various functional equations (in the form of congruences) for finite polylogarithms,
which we collect in Section~\ref{sec:polylog} after recalling definitions and main properties.
This material is well known, and mostly traces back to Mirimanoff in some form~\cite[p.~61]{Mirimanoff}.
However, we provide two new proofs of a 4-term identity (congruence) for the finite logarithm $\pounds_1$
due to Kontsevich~\cite{Kontsevich}, which deduce it from the fundamental functional equation for the ordinary logarithmic function
by the same methods which inform the present paper.

Another preparatory section is Section~\ref{sec:transform}, where we discuss a certain involutory transform for sequences,
given in Equation~\eqref{eq:transform_general}, which we need in the sequel.
The transform itself is originally due to Euler, see~\cite[Equation~(1.20)]{Norlund},
but the known proof only works in an analytic context where the series involved are assumed to have a positive convergence radius,
as it depends on an integral formula for the Hadamard product of power series.
We provide a purely algebraic proof, which thus works over any field of characteristic zero.
Then we deduce a corresponding truncated version modulo a prime.
This congruence is not new either, but our deducing it from its infinite analogue is in line with the spirit of this paper.

In Section~\ref{sec:applications} we show how our polynomial congruences of Section~\ref{sec:polynomials}
can be evaluated on special values for $x$ so as to obtain numerical congruences.
The main obstacle here is the need for evaluations modulo $p$ in a closed form of the finite polylogarithms involved.
\comm{check}
Such evaluations, some of which were obtained in~\cite[Section~4]{MatTau:polylog},
are available only for a limited set of values of the argument, and fewer so as the level increases.
Nevertheless, a number of such congruences can be obtained, and we provide a sample of some new ones.

\section{A basic example: central binomial coefficients}\label{sec:original}

The central binomial coefficients have a generating function which we recalled in
Equation~\eqref{eq:central_gf}.
We will see that the polynomial obtained by omitting all terms of degree at least $q$ from the generating function
admits an equally nice closed form when viewed modulo $p$.
The following crucial step is essentially the same which leads to a proof of Lucas' theorem on binomial coefficients modulo $p$.

\begin{lemma}\label{lemma:central_pol}
If $q$ is a power of an odd prime $p$ we have
\begin{equation*}
\sum_{k=0}^{\infty}\binom{2k}{k}x^k\equiv(1-4x)^{(q-1)/2}
\cdot\sum_{k=0}^{\infty}\binom{2k}{k}x^{kq}\pmod{p}
\end{equation*}
in the formal power series ring $\Z[[x]]$.
\end{lemma}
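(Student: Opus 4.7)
The plan is to derive the congruence by squaring both sides, which converts the lemma into a statement about rational functions rather than one involving a formal square root, and then applying the Frobenius-style congruence $(1-4x)^q\equiv 1-4x^q\pmod{p}$. Set $f(x)=\sum_{k=0}^{\infty}\binom{2k}{k}x^k$ and $g(x)=(1-4x)^{(q-1)/2}f(x^q)$. Both lie in $\Z[[x]]$, since $(q-1)/2$ is a nonnegative integer ($q$ is odd), and both have constant term $1$; the goal is to show $f(x)\equiv g(x)\pmod{p}$.

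The generating function identity~\eqref{eq:central_gf} can be rephrased algebraically as $(1-4x)\,f(x)^2=1$ in $\Z[[x]]$; substituting $x^q$ for $x$ gives $(1-4x^q)\,f(x^q)^2=1$. Consequently
\[
(1-4x)\,g(x)^2
=(1-4x)^q\,f(x^q)^2
=\frac{(1-4x)^q}{1-4x^q}.
\]
I would then reduce the numerator modulo $p$: the standard facts that $\binom{q}{k}\equiv 0\pmod{p}$ for $1\le k\le q-1$ (since $q$ is a power of $p$) and that $4^q\equiv 4\pmod{p}$ by Fermat's little theorem together give $(1-4x)^q\equiv 1-4x^q\pmod{p}$. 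It follows that $(1-4x)\,g(x)^2\equiv 1\pmod{p}$ in $\Z[[x]]$, and combining with the exact identity $(1-4x)\,f(x)^2=1$ yields $(1-4x)\bigl(g(x)^2-f(x)^2\bigr)\equiv 0\pmod{p}$.

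Since $1-4x$ is a unit in $\Z[[x]]$, we may cancel it to obtain $g(x)^2\equiv f(x)^2\pmod{p}$, whence $(g(x)-f(x))(g(x)+f(x))\equiv 0\pmod{p}$ in $\F_p[[x]]$. The factor $g(x)+f(x)$ has constant term $2$, which is invertible modulo the odd prime $p$, so $g(x)+f(x)$ is a unit in $\F_p[[x]]$; cancelling it gives $g(x)\equiv f(x)\pmod{p}$, as required.

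The technical heart of the argument is the single congruence $(1-4x)^q\equiv 1-4x^q\pmod{p}$, which is precisely the ingredient underlying Lucas's theorem and matches the paper's preparatory remark. The only mild obstacle is the final step, which requires the hypothesis $p\neq 2$ in order to invert $f(x)+g(x)$ and pass from the squared congruence back to the linear one.
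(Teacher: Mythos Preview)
Your argument is correct. It shares with the paper's proof the same decisive ingredient, the Frobenius congruence $(1-4x)^q\equiv 1-4x^q\pmod{p}$, but packages it differently. The paper works directly with the formal exponent $-1/2$, writing
\[
(1-4x)^{-1/2}=(1-4x)^{(q-1)/2}\bigl((1-4x)^q\bigr)^{-1/2}\equiv(1-4x)^{(q-1)/2}(1-4x^q)^{-1/2}\pmod{p},
\]
and then observes that all the binomial series involved have integer coefficients, so the congruence makes sense in $\Z[[x]]$. You instead avoid fractional exponents entirely: you characterise $f$ by the algebraic equation $(1-4x)f(x)^2=1$, verify that $g$ satisfies the same equation modulo $p$, and invoke uniqueness of square roots with constant term $1$ in $\F_p[[x]]$. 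The paper's route is a line shorter, but yours is more self-contained in that it never needs to check $p$-integrality of $\bigl((1-4x)^q\bigr)^{-1/2}$ or to justify that raising to the $-1/2$ power respects congruences modulo $p$; those points are implicit in the paper's phrase ``all binomial power series involved have integral coefficients'' but are sidestepped altogether in your version.
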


\begin{proof}
Recall from Equation~\eqref{eq:central_gf} that
$
\sum_{k=0}^\infty\binom{2k}{k}x^k
=
(1-4x)^{-1/2}
$.
Basic facts about binomial coefficients and Fermat's Little Theorem imply that $(1-4x)^q\equiv 1-(4x)^q\equiv 1-4x^q\pmod{p}$.
Therefore, noting that all binomial power series involved have integral coefficients, we have
\begin{equation*}
\begin{aligned}
(1-4x)^{-1/2}
&=
(1-4x)^{(q-1)/2}\bigl((1-4x)^q\bigr)^{-1/2}
\\&\equiv
(1-4x)^{(q-1)/2}(1-4x^q)^{-1/2}
\pmod{p},
\end{aligned}
\end{equation*}
and the desired conclusion follows.
\end{proof}

By looking at the terms of degree less than $q$ in the congruence of power series given in Lemma~\ref{lemma:central_pol} we deduce the following polynomial congruence.
Its special case $q=p$ appeared in~\cite[p.~467]{Bacher-Chapman}, where the authors, however, suggested a proof by direct calculation.

\begin{theorem}\label{thm:central_pol}
If $q$ is a power of an odd prime $p$ we have
\begin{equation*}
\sum_{k=0}^{q-1}\binom{2k}{k}x^k\equiv(1-4x)^{(q-1)/2}\pmod{p}.
\end{equation*}
\end{theorem}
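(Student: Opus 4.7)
The plan is to read off the theorem directly from Lemma~\ref{lemma:central_pol} by truncating the congruence of power series at degree $q$. The essential point is that, although the congruence of the lemma is between formal power series, the factor $(1-4x)^{(q-1)/2}$ on its right-hand side is a polynomial of degree $(q-1)/2$, which is strictly less than $q$, and the second factor $\sum_{k=0}^{\infty}\binom{2k}{k}x^{kq}$ contains only powers $x^{kq}$ with $k\ge 0$.

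Concretely, I would argue as follows. Comparing coefficients modulo $p$ of $x^n$ for $0\le n<q$ on both sides of the congruence
\[
\sum_{k=0}^{\infty}\binom{2k}{k}x^k\equiv(1-4x)^{(q-1)/2}\cdot\sum_{k=0}^{\infty}\binom{2k}{k}x^{kq}\pmod{p},
\]
the left-hand side contributes exactly $\sum_{k=0}^{q-1}\binom{2k}{k}x^k$. On the right-hand side, a monomial $x^n$ with $n<q$ arises only from the product of a term in $(1-4x)^{(q-1)/2}$ of degree $n$ with the constant term of $\sum_{k=0}^{\infty}\binom{2k}{k}x^{kq}$, since the $k\ge 1$ terms of that series contribute degrees $\ge q$. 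That constant term equals $1$, so the truncation of the right-hand side at degree $q$ is precisely $(1-4x)^{(q-1)/2}$ itself (which already has degree below $q$, hence is unchanged). Equating these two truncations gives the theorem.

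There is essentially no obstacle here beyond checking that $(q-1)/2<q$, which is obvious for every $q\ge 1$, so the whole deduction is a one-step extraction of coefficients. All of the analytic and number-theoretic content already lives in Lemma~\ref{lemma:central_pol}, whose proof relied on Fermat's little theorem applied to the binomial identity $(1-4x)^{-1/2}=(1-4x)^{(q-1)/2}\bigl((1-4x)^q\bigr)^{-1/2}$.
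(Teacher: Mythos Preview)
Your proposal is correct and is essentially identical to the paper's own argument: the paper simply states that the theorem follows by looking at the terms of degree less than $q$ in the congruence of Lemma~\ref{lemma:central_pol}, which is exactly the truncation you carry out in detail. Your explicit remark that only the constant term $1$ of $\sum_{k\ge 0}\binom{2k}{k}x^{kq}$ contributes in degrees below $q$, because $(1-4x)^{(q-1)/2}$ has degree $(q-1)/2<q$, is precisely the content of that one-line deduction.
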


The exclusion of $p=2$ here is harmless as the central binomial coefficients are all even for $k>0$.
In a formula in $\Z[[x]]$, we have
$\sum_{k=0}^{\infty}\binom{2k}{k}x^k\equiv 1\pmod{2}$.

If desired, Lemma~\ref{lemma:central_pol} readily provides a closed form evaluation modulo $p$ for the sum
$\sum_{k=0}^{rq-1}\binom{2k}{k}x^k$,
for any given positive integer $r$, such as
\[
\sum_{k=0}^{2q-1}\binom{2k}{k}x^k\equiv(1-4x)^{(q-1)/2}(1+2x^q)\pmod{p}.
\]
In the sequel we will disregard such straightforward extensions
and focus on the most natural range $0\le k<q$ for similar sums.

Variations on the congruence of Theorem~\ref{thm:central_pol} where the central binomial coefficients
are multiplied by fixed powers of $k$ are easily obtained by the familiar device of repeated application of the operator $xD$,
that is, differentiation followed by multiplication by $x$.
As an example, from Theorem~\ref{thm:central_pol} we obtain
\begin{equation}\label{eq:central_pol_k}
\sum_{k=1}^{q-1}k\binom{2k}{k}x^k
\equiv
xD(1-4x)^{(q-1)/2}
\equiv
2x(1-4x)^{(q-3)/2}
\pmod{p}
\end{equation}
for $p$ odd.

Before we continue we introduce a convenient piece of notation about congruences between power series.
By a congruence with respect to a modulus $(x^s,p)$ between power series we mean that the polynomials obtained from them
by discarding all terms of degree $s$ or higher have integral coefficients and are congruent modulo $p$.
When both sides have integral coefficients this amounts to equality of their images in the quotient ring $\Z[[x]]/(x^s,p)$.
However, it will be convenient to allow ourselves greater flexibility and only require that the coefficients
of powers of $x$ of exponents less than $s$ are integers;
this does not admit a natural interpretation in terms of quotient rings.

Our next result concerns a polynomial version modulo $p$ of the generating function of the Catalan numbers,
\begin{equation}\label{eq:Catalan_gf}
\sum_{k=0}^\infty C_kx^{k+1}
=
\frac{1-\sqrt{1-4x}}{2}.
\end{equation}
This well-known generating function is usually written with both sides divided by $x$,
but the above formulation is more convenient for our present goals.
Equation~\eqref{eq:Catalan_gf} can be obtained by integrating Equation~\eqref{eq:central_gf} and adjusting the constant term.

\begin{theorem}\label{thm:Catalan_pol}
If $q$ is a power of an odd prime $p$ we have
\begin{equation*}
\sum_{k=0}^{q-1}C_kx^{k+1}\equiv\frac{1-(1-4x)^{(q+1)/2}}{2}-x^q\pmod{p}.
\end{equation*}
\end{theorem}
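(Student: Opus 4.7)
The plan is to mirror the structure of Lemma \ref{lemma:central_pol} and Theorem \ref{thm:central_pol}: first produce a power series congruence in $\Z[[x]]$, then extract the terms of degree at most $q$. From \eqref{eq:Catalan_gf} I write $(1-4x)^{1/2}=(1-4x)\cdot(1-4x)^{-1/2}$ and multiply the congruence of Lemma \ref{lemma:central_pol} by $(1-4x)$ to obtain
\[
(1-4x)^{1/2}\equiv (1-4x)^{(q+1)/2}\cdot(1-4x^q)^{-1/2}\pmod{p}
\]
in $\Z[[x]]$. Substituting into $\sum_{k=0}^\infty C_k x^{k+1}=(1-\sqrt{1-4x})/2$ and expanding $(1-4x^q)^{-1/2}=\sum_{j=0}^\infty\binom{2j}{j}x^{jq}$ yields the power series congruence
\[
\sum_{k=0}^\infty C_k x^{k+1}\equiv \frac{1}{2}\Bigl(1-(1-4x)^{(q+1)/2}\sum_{j=0}^\infty\binom{2j}{j}x^{jq}\Bigr)\pmod{p}.
\]

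The second step is to read off the terms of degree at most $q$. The left-hand side truncates to $\sum_{k=0}^{q-1}C_kx^{k+1}$, since the first omitted term $C_qx^{q+1}$ has degree $q+1$. On the right, $(1-4x)^{(q+1)/2}$ is a polynomial of degree $(q+1)/2<q+1$, and in its product with $\sum_j\binom{2j}{j}x^{jq}$ only $j=0$ and $j=1$ can contribute at degree $\le q$: $j=0$ reproduces $(1-4x)^{(q+1)/2}$ in full, while the $j=1$ contribution $2x^q\cdot(1-4x)^{(q+1)/2}$ leaves only $2x^q$ after truncation modulo $x^{q+1}$ (any other product with $2x^q$ has degree $>q$). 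Collecting these pieces yields exactly the claimed
\[
\sum_{k=0}^{q-1}C_kx^{k+1}\equiv\frac{1-(1-4x)^{(q+1)/2}}{2}-x^q\pmod{p}.
\]

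The step most prone to error is locating the corrective $-x^q$ term. A natural attempt to derive the theorem by integrating the congruence of Theorem \ref{thm:central_pol} term by term, using $C_k=\binom{2k}{k}/(k+1)$, breaks down precisely at $k=p-1,2p-1,\ldots$, where $k+1$ is not invertible modulo $p$. Performing the algebraic manipulation in $\Z[[x]]$ first and truncating only afterwards sidesteps this obstruction, and the missing $-x^q$ emerges cleanly from the $j=1$ term of $(1-4x^q)^{-1/2}$.
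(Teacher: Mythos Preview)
Your proof is correct and follows essentially the same route as the paper's: multiply the congruence from Lemma~\ref{lemma:central_pol} by $(1-4x)$ to pass from $(1-4x)^{-1/2}$ to $(1-4x)^{1/2}$, substitute into the Catalan generating function, and truncate at degree $q$. The only cosmetic difference is that the paper truncates earlier (working modulo $(x^{q+1},p)$ so that $(1-4x^q)^{-1/2}$ becomes $1+2x^q$ immediately), while you keep the full series $\sum_j\binom{2j}{j}x^{jq}$ and extract the $j=0,1$ contributions at the end.
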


As was the case for Theorem~\ref{thm:central_pol},
the excluded case $p=2$ in Theorem~\ref{thm:central_pol} can easily be dealt with directly, as
$\sum_{k=0}^{\infty}C_kx^k\equiv \sum_{i=0}^\infty x^{2^i-1}\pmod{2}$.
From now on we will disregard the case $p=2$ in our congruences as trivial and easily dealt with separately.

\begin{proof}
From Lemma~\ref{lemma:central_pol} we obtain
\[
(1-4x)^{-1/2}
\equiv
(1-4x)^{(q-1)/2}
+2x^q
\pmod{(x^{q+1},p)},
\]
whence
\[
(1-4x)^{1/2}
=
(1-4x)^{-1/2}(1-4x)
\equiv
(1-4x)^{(q+1)/2}
+2x^q
\pmod{(x^{q+1},p)}.
\]
Therefore, we have
\[
\sum_{k=0}^\infty C_kx^{k+1}
=
\frac{1-\sqrt{1-4x}}{2}
\equiv
\frac{1-(1-4x)^{(q+1)/2}}{2}-x^q\pmod{(x^{q+1},p)},
\]
which yields the desired conclusion.
\end{proof}

The congruences of Equation~\eqref{eq:sums_mod_p}, for odd $p$,
follow by evaluation at $x=1$ of the polynomials of Theorems~\ref{thm:central_pol} and~\ref{thm:Catalan_pol},
using the fact that
$(-3)^{(q-1)/2}
\equiv\tLegendre{q}{3}
\pmod{p}$.
This is easily shown
either by using Jacobi symbols and Gauss' quadratic reciprocity law,
or by viewing $-3$ as the discriminant of the polynomial
$(x^3-1)/(x-1)=x^2+x+1$, and then evaluating on $-3$ the quadratic character of the finite field $\F_q$.

Generating functions of combinatorial sequences which have a similar form as that of the central binomial coefficients
admit a similar treatment.
We give only one example.
The {\em central trinomial coefficient} $T_k$ is the coefficient of $x^k$ in $(1+x+x^2)^k$.
It is well known and easy to prove that these numbers admit the generating function
\[
\sum_{k=0}^{\infty}T_kx^k
=\bigl((1-3x)(1+x)\bigr)^{-1/2}
=(1-2x-3x^2)^{-1/2}.
\]
Exactly as in the proof of Theorem~\ref{thm:central_pol} we find the following congruence.

\begin{theorem}\label{thm:central_pol_trinomial}
If $q$ is a power of an odd prime $p$ we have
\begin{equation*}
\sum_{k=0}^{q-1}T_kx^k\equiv(1-2x-3x^2)^{(q-1)/2}\pmod{p}.
\end{equation*}
\end{theorem}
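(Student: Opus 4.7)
The plan is to mimic the proof of Theorem~\ref{thm:central_pol} verbatim, with the quadratic $1-2x-3x^2$ playing the role of the linear $1-4x$. First I would establish a power series congruence analogous to Lemma~\ref{lemma:central_pol}, namely
\[
\sum_{k=0}^{\infty}T_kx^k\equiv(1-2x-3x^2)^{(q-1)/2}\cdot\sum_{k=0}^{\infty}T_kx^{kq}\pmod{p}
\]
in $\Z[[x]]$. To obtain this, I would write
\[
(1-2x-3x^2)^{-1/2}=(1-2x-3x^2)^{(q-1)/2}\bigl((1-2x-3x^2)^q\bigr)^{-1/2}
\]
and invoke the Frobenius-type congruence $(1-2x-3x^2)^q\equiv 1-2x^q-3x^{2q}\pmod p$. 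The latter follows from $(a+b+c)^p\equiv a^p+b^p+c^p\pmod p$ (a standard consequence of the binomial theorem mod~$p$) together with Fermat's little theorem applied to the coefficients $-2$ and $-3$, and then raising to the power $q/p$. Note that every power series involved has integer coefficients, since $T_k\in\Z$ and the polynomial factor $(1-2x-3x^2)^{(q-1)/2}$ is obviously integral.

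Having established the power series congruence, the theorem follows at once by extracting the coefficients of $x^k$ for $0\le k<q$: on the right-hand side the factor $\sum_{k=0}^{\infty}T_kx^{kq}$ contributes only its constant term $1$ in that range, while $(1-2x-3x^2)^{(q-1)/2}$ is a polynomial of degree $q-1$, so the whole right-hand side truncates to $(1-2x-3x^2)^{(q-1)/2}$ modulo $(x^q,p)$.

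I expect no serious obstacle. The only point requiring a moment's care is verifying the Frobenius congruence for a trinomial rather than a binomial, but this reduces to the familiar fact that all intermediate multinomial coefficients $\binom{p}{i,j,k}$ with $i,j,k<p$ are divisible by~$p$. Everything else is formally identical to the proof of Theorem~\ref{thm:central_pol}.
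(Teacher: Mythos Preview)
Your proposal is correct and is exactly the approach the paper intends: the paper states that the congruence is found ``exactly as in the proof of Theorem~\ref{thm:central_pol}'', and your argument carries out precisely that, establishing the analogue of Lemma~\ref{lemma:central_pol} via the Frobenius congruence $(1-2x-3x^2)^q\equiv 1-2x^q-3x^{2q}\pmod p$ and then truncating below degree~$q$.
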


For example, this shows that for any odd prime we have
$\sum_{k=0}^{p-1}T_k\equiv (-1)^{(p-1)/2}\pmod{p}$,
and
$\sum_{k=0}^{p-1}(-1)^kT_k\equiv 0\pmod{p}$.

\section{A variation: shifted central binomial coefficients}\label{sec:shifted}

In this section we consider {\em shifted} variants $\binom{2k}{k+d}$ of the central binomial coefficients,
as done in~\cite{PanSun} and in various papers which followed.
We prefer to use $\binom{2k+e}{k}$ instead, which takes care of
sums of binomial coefficients $\binom{2k-1}{k+d}$ as well as of $\binom{2k}{k+d}$ at the same time,
because the generating function for the corresponding series appears to be better known.
For any nonnegative integer $e$ we have
\begin{equation}\label{eq:central_shifted_gf}
\sum_{k\ge 0}
\binom{2k+e}{k}x^{k}
=
\frac{1}{\sqrt{1-4x}}
\left(\frac{1-\sqrt{1-4x}}{2x}\right)^{e},
\end{equation}
see~\cite[Equation~(2.47)]{Wilf}.
We will now devise a truncated version modulo $p$ of this equation, over the range $0\le k<q$.

Because
$\binom{2k+e+q}{k}\equiv\binom{2k+e}{k}$
for $0\le k<q$ and $e\ge 0$, according to Lucas' theorem,
we may and will assume $0\le e<q$.
Again because of Lucas' theorem we have
$\binom{2k+e}{k}\equiv 0\pmod{p}$
for $(q-e)/2\le k<q-e$,
and also for $q-e/2\le k<q$.
Hence within the range $0\le k<q$, the binomial coefficient can only be nonzero modulo $p$
on the two subintervals
$0\le k<(q-e)/2$ and $q-e\le k<q-e/2$.
It is convenient to separate the contributions of those two ranges in our polynomial congruence.

\begin{theorem}\label{thm:central_shifted_pol}
Let $q$ be a power of an odd prime $p$, and let $0\le e<q$.
In the polynomial ring $\Z[\beta]$, setting $x=\beta(1-\beta)$ and $\alpha=1-\beta$, we have
\begin{equation*}
\sum_{0\le k<(q-e)/2}\binom{2k+e}{k}x^k
\equiv
\frac{\alpha^{q-e}-\beta^{q-e}}{\alpha-\beta}
\pmod{p},
\end{equation*}
and
\begin{equation*}
\sum_{0\le k<q}\binom{2k+e}{k}x^k
\equiv
\frac{\alpha^{2q-e}-\beta^{2q-e}}{\alpha-\beta}
\pmod{p}.
\end{equation*}
\end{theorem}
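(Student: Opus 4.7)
The plan is to prove both congruences as polynomial identities in $\F_p[\beta]$, using the closed form from Equation~\eqref{eq:central_shifted_gf} rewritten in the $\alpha,\beta$ coordinates. With $x=\beta(1-\beta)$, one has $\sqrt{1-4x}=\alpha-\beta$ and $(1-\sqrt{1-4x})/(2x)=1/\alpha$, so Equation~\eqref{eq:central_shifted_gf} becomes, as a formal power series in $\beta$,
\[
G(x):=\sum_{k\ge 0}\binom{2k+e}{k}x^k=\frac{1}{(\alpha-\beta)\alpha^e}=\frac{1}{(1-2\beta)(1-\beta)^e}.
\]

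For the first identity, I would work in the quotient ring $\F_p[\beta]/(\beta^{q-e})$. Since $(1-\beta)^q\equiv 1-\beta^q\equiv 1\pmod{(p,\beta^{q-e})}$ (the last step because $q\ge q-e$), multiplying $G(x)$ by this factor yields
\[
G(x)\equiv\frac{(1-\beta)^{q-e}}{1-2\beta}\equiv\frac{(1-\beta)^{q-e}-\beta^{q-e}}{1-2\beta}=\frac{\alpha^{q-e}-\beta^{q-e}}{\alpha-\beta}\pmod{(p,\beta^{q-e})},
\]
the middle step using that $\beta^{q-e}/(1-2\beta)$ has $\beta$-order $q-e$. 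On the other side, Lucas' theorem gives $\binom{2k+e}{k}\equiv 0\pmod p$ for $(q-e)/2\le k<q-e$, while the monomials $x^k=\beta^k(1-\beta)^k$ have $\beta$-order $k$ and so vanish modulo $\beta^{q-e}$ when $k\ge q-e$. Consequently $G(x)$ truncates, modulo $(p,\beta^{q-e})$, to the partial sum $\sum_{0\le k<(q-e)/2}\binom{2k+e}{k}x^k$. Both sides of the claimed identity are polynomials in $\beta$ of degree at most $q-e-1$, so agreement modulo $\beta^{q-e}$ suffices.

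For the second identity, I would decompose the range $0\le k<q$ using Lucas' theorem. The two sub-ranges $(q-e)/2\le k<q-e$ and $q-e/2\le k<q$ contribute zero modulo $p$. The range $0\le k<(q-e)/2$ contributes $(\alpha^{q-e}-\beta^{q-e})/(\alpha-\beta)$ by the first identity. For the remaining range $q-e\le k<q-e/2$ (non-empty only when $e>0$), the reindexing $k=k'+(q-e)$ combined with the Lucas-theorem identity $\binom{2k+e}{k}\equiv\binom{2k'+(q-e)}{k'}\pmod p$ turns this sub-sum into $x^{q-e}\sum_{0\le k'<e/2}\binom{2k'+(q-e)}{k'}x^{k'}$; the first identity, applied with $e$ replaced by $q-e$, then evaluates it as $\beta^{q-e}\alpha^{q-e}\cdot(\alpha^e-\beta^e)/(\alpha-\beta)$. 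Summing the two nonzero pieces and simplifying the numerator via $\alpha^q\equiv 1-\beta^q\pmod p$ yields $\alpha^{q-e}(1-\beta^q)-\beta^{q-e}(1-\alpha^q)\equiv\alpha^{2q-e}-\beta^{2q-e}$ modulo $p$, as desired.

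The main obstacle, in my view, is bookkeeping rather than substance: one must carefully justify the Lucas-theorem case analysis over the four sub-ranges, track the degree-in-$\beta$ bounds needed to lift congruences modulo $\beta^{q-e}$ to honest polynomial congruences in $\F_p[\beta]$, and handle the boundary case $e=0$ separately (where the second identity follows from the first by the direct simplification $(\alpha^q-\beta^q)/(\alpha-\beta)\equiv(\alpha^{2q}-\beta^{2q})/(\alpha-\beta)\pmod p$ using $\alpha^q+\beta^q\equiv 1$).
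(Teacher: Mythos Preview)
Your proposal is correct, and the bookkeeping caveats you flag (degree bounds, the Lucas case analysis on the four sub-ranges, the $e=0$ boundary) are all genuine but routine, exactly as you say.

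The route differs from the paper's in an instructive way. The paper works throughout modulo $(\beta^q,p)$: it multiplies the generating function by $(1-\beta)^e(1-2\beta)^q$ to obtain a single congruence, massages the higher nonzero sub-sum so that both sides have degree less than $q$, and then exploits the $\alpha\leftrightarrow\beta$ symmetry---writing down the companion congruence with $\alpha$ and $\beta$ swapped and solving the resulting $2\times 2$ linear system---to isolate each of the two sub-sums separately. The second identity then follows by recombining them.

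You instead prove the first identity directly by working modulo the smaller ideal $(\beta^{q-e},p)$, which kills the higher sub-range outright and makes the symmetry trick unnecessary. For the second identity you bootstrap: the Lucas reindexing $k=k'+(q-e)$ turns the higher sub-sum into another instance of the first identity with parameter $q-e$, and the two pieces glue via $\alpha^q+\beta^q\equiv 1\pmod p$. Your argument is a bit more elementary---no linear algebra, no appeal to symmetry---at the price of the explicit Lucas computation $\binom{2k+e}{k}\equiv\binom{2k'+(q-e)}{k'}$. The paper's approach, by contrast, puts the $\alpha\leftrightarrow\beta$ invariance to concrete use, which fits a theme the paper emphasises repeatedly in later sections.
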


At face value, because of the presence of a denominator
the right-hand sides of the congruences in Theorem~\ref{thm:central_shifted_pol} appear to belong only to the power series ring $\Z[[x]]$,
but they are actually polynomials after cancellation takes place.
Both congruences extend Theorem~\ref{thm:central_pol}, which is the special case $e=0$.

The rationale for introducing a new indeterminate $\beta$ will be clearer in Section~\ref{sec:series}, see Equation~\eqref{eq:beta}.
It is essentially a device to have polynomial congruences in $\beta$ rather than involving power series in $x$.
In terms of $x$, the indeterminate $\beta$ can be taken as the generating function of the Catalan numbers in the form given in Equation~\eqref{eq:Catalan_gf}, that is,
$\beta=(1-\sqrt{1-4x})/2$, an element of the power series ring $\Z[[x]]$.

\begin{proof}
In terms of $\beta$, Equation~\eqref{eq:central_shifted_gf} reads
\[
\sum_{k=0}^\infty\binom{2k+e}{k}x^k
=
\frac{1}{(1-2\beta)(1-\beta)^e},
\]
which takes place in the power series ring $\Q[[\beta]]$, with $x=\beta(1-\beta)$.
However, because all coefficients are integers it actually takes place in $\Z[[x]]$.
After multiplying both sides by $(1-\beta)^e$ and then by $(1-2\beta)^q\equiv 1\pmod{(\beta^q,p)}$ we obtain
\begin{equation}\label{eq:central_shifted_pol_tmp}
(1-\beta)^e
\sum_{0\le k<q}\binom{2k+e}{k}x^k
\equiv
(1-2\beta)^{q-1}
\pmod{(\beta^q,p)}.
\end{equation}
Because the binomial coefficients involved in the left-hand side vanish modulo $p$ for $q-e/2\le k<q$ the left-hand side is a polynomial of degree less than $2q$
when viewed modulo $p$, and so the congruence requires the double modulus $(\beta^q,p)$ to be valid.
However, once we restrict the summation range in Equation~\eqref{eq:central_shifted_pol_tmp}
to those two subintervals where the binomial coefficients may possibly not vanish modulo $p$,
for the higher subinterval we have
\[
(1-\beta)^e\sum_{q-e\le k<q-e/2}\binom{2k+e}{k}x^{k}
\equiv
\beta^{q-e}
\sum_{q-e\le k<q-e/2}\binom{2k+e}{k}x^{k-q+e}
\pmod{(\beta^q,p)},
\]
because $(1-\beta)^ex^{q-e}=(1-\beta)^q\beta^{q-e}\equiv\beta^{q-e}\pmod{(\beta^q,p)}$.
Making this replacement Equation~\eqref{eq:central_shifted_pol_tmp} turns it into
\begin{multline*}
(1-\beta)^e
\sum_{0\le k<(q-e)/2}\binom{2k+e}{k}x^k
+
\beta^{q-e}
\sum_{q-e\le k<q-e/2}\binom{2k+e}{k}x^{k-q+e}
\\
\equiv
(1-2\beta)^{q-1}
\pmod{(\beta^q,p)}.
\end{multline*}
However, the left-hand side of this congruence is now a polynomial of degree less than $q$ (in the indeterminate $\beta$).
Because so is the right-hand side, the congruence actually holds modulo $p$.

Now consider the above congruence and the one obtained from it by
interchanging the roles of $\beta$ and $\alpha=1-\beta$.
Taking suitable linear combinations of them
we obtain the first congruence of the theorem, as well as
\[
\sum_{q-e\le k<q-e/2}\binom{2k+e}{k}x^{k-q+e}
\equiv
\frac{\alpha^e-\beta^e}{\alpha-\beta}
\pmod{p}.
\]
Multiplying both sides of this congruence by $x^{q-e}=\alpha^{q-e}\beta^{q-e}$
and adding it to the first congruence of the theorem we obtain the second congruence of the theorem.
\end{proof}

Note the similarity of the congruences of Theorem~\ref{thm:central_shifted_pol} with
Equation~\eqref{eq:central_shifted_gf}.
However, the congruences are (necessarily) invariant under interchanging $\beta$ with $\alpha=1-\beta$,
and this fact has no counterpart in Equation~\eqref{eq:central_shifted_gf}.

Setting $e=2d$ in Theorem~\ref{thm:central_shifted_pol},
appropriately shifting the summation range, and considering the vanishing modulo $p$ of the binomial coefficients involved over part of the range,
we obtain
\begin{equation*}
\sum_{0\le k<q}\binom{2k}{k-d}x^{k-d}
\equiv
\frac{\alpha^{2q-2d}-\beta^{2q-2d}}{\alpha-\beta}
\pmod{p}
\end{equation*}
for $0\le d<q$.
More precisely, this alternate formulation follows directly as described for $0\le d<q/2$,
and after a simple manipulation for $q/2<d<q$.
By specializing $\beta$ to be a complex primitive sixth root of unity, whence $x=1$, we obtain Equation~\eqref{eq:2k_k+d} of the Introduction.
Variations such as
\begin{equation*}
\sum_{0\le k<q}k\binom{2k}{k-d}x^{k-d}
\equiv
\frac{
2\bigl(\alpha^{2q-2d}-\beta^{2q-2d}\bigr)x
}{(\alpha-\beta)^3}
+
\frac{
d\bigl(\alpha^{2q-2d}+\beta^{2q-2d}\bigr)
}{(\alpha-\beta)^2}
\pmod{p}
\end{equation*}
can be obtained by a suitable application of the differential operator
$\mathrm{d}/\mathrm{d}x=\bigl(1/(1-2\beta)\bigr)\cdot\mathrm{d}/\mathrm{d}\beta$.

\section{A sequence transform and its modular version}\label{sec:transform}

In this section we consider an involutory transform for sequences, which will be needed later in the paper,
and show that a truncated version modulo $p$
can be deduced through a similar method as employed in the previous section.

We start by recalling the more well-known {\em binomial transform}.
Given a sequence $(a_n)_{n=0}^{\infty}$ of elements of a field $F$ (or of any ring, for that matter),
its {\em binomial transform} is the sequence $(b_n)$ defined by
$b_n=\sum_{k=0}^{n}(-1)^k\binom{n}{k} a_k$,
which is more conveniently written as
$b_n=\sum_{k=0}^{\infty}(-1)^k\binom{n}{k} a_k$.
With our choice of signs (not shared by all authors) the binomial transform is {\em involutory,}
meaning that it coincides with the inverse transform, and hence
$a_n=\sum_{k=0}^{\infty}(-1)^k\binom{n}{k} b_k$.
An easy way to see this is noting that the exponential generating functions
$\tilde A(x)=\sum_{k=0}^{\infty}a_kx^k/k!$ and
$\tilde B(x)=\sum_{k=0}^{\infty}b_kx^k/k!$
are related by
$\tilde B(x)=\exp(-x)\cdot\tilde A(-x)$.

Here we are interested in the related transform
\begin{equation}\label{eq:transform}
\sum_{k=0}^{\infty}\binom{2k}{k} a_k x^k
=\frac{1}{\sqrt{1-4x}}\sum_{k=0}^{\infty}
\binom{2k}{k}b_k \left(\frac{-x}{1-4x}\right)^k,
\end{equation}
where $(a_n)$ and $(b_n)$ are connected by the binomial transform.
Writing $\binom{2k}{k}=(-4)^k\binom{-1/2}{k}$,
Equation~\eqref{eq:transform} follows from the following more general fact by evaluating at $y=-1/2$.

\begin{prop}\label{prop:transform_general}
Let $F$ be a field of characteristic zero.
In the ring $\bigl(F[y]\bigr)[[x]]$ we have
\begin{equation}\label{eq:transform_general}
\sum_{k=0}^{\infty}\binom{y}{k} a_k x^k
=(1+x)^y\sum_{k=0}^{\infty}
\binom{y}{k}b_k \left(\frac{x}{1+x}\right)^k,
\end{equation}
where $a_k\in F$ and $b_n=\sum_{k=0}^{\infty}(-1)^k\binom{n}{k} a_k$
for all $n\ge 0$.
\end{prop}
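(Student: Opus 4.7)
The plan is to work formally in the ring $(F[y])[[x]]$ and prove the identity by direct expansion of the right-hand side, with no appeal to analytic convergence. I would begin by substituting $b_{k}=\sum_{j=0}^{k}(-1)^{j}\binom{k}{j}a_{j}$ into~\eqref{eq:transform_general} and interchanging the order of summation. The interchange is legal because, for each $n\ge 0$, only finitely many pairs $(j,k)$ contribute to the coefficient of $x^{n}$: the factor $\binom{k}{j}$ vanishes for $j>k$, while $\bigl(x/(1+x)\bigr)^{k}$ lies in the ideal $(x^{k})$, so the $k$-range is effectively $j\le k\le n$. After the swap the right-hand side reads
\[
\sum_{j=0}^{\infty}(-1)^{j}a_{j}\,(1+x)^{y}\sum_{k=j}^{\infty}\binom{y}{k}\binom{k}{j}\left(\frac{x}{1+x}\right)^{k}.
\]

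The crucial algebraic ingredient is the polynomial identity $\binom{y}{k}\binom{k}{j}=\binom{y}{j}\binom{y-j}{k-j}$, which holds in $F[y]$ because both sides are polynomials in $y$ of the same degree agreeing at every integer $y\ge k$. Inserting it and reindexing by $m=k-j$, the inner sum becomes
\[
\binom{y}{j}\left(\frac{x}{1+x}\right)^{j}\sum_{m=0}^{\infty}\binom{y-j}{m}\left(\frac{x}{1+x}\right)^{m},
\]
whose trailing factor is the formal binomial expansion of $\bigl(1+x/(1+x)\bigr)^{y-j}$ in $(F[y])[[x]]$. This substitution is legitimate because $x/(1+x)$ lies in the maximal ideal $(x)$ of $(F[y])[[x]]$, so the binomial series evaluates without convergence concerns.

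Combining the three factors $(1+x)^{y}$, $\bigl(x/(1+x)\bigr)^{j}$ and $\bigl(1+x/(1+x)\bigr)^{y-j}$, the $(1+x)$-powers simplify and the $j$-th outer summand collapses to $\binom{y}{j}a_{j}x^{j}$, so the right-hand side matches the left-hand side term by term. The main obstacle is the careful bookkeeping of the $y$-dependent exponents and sign factors through this final simplification: nothing is conceptually deep, but one has to verify at each step that the manipulation is a bona fide identity in $(F[y])[[x]]$. This algebraic character is exactly what distinguishes the present proof from the classical derivation via the Hadamard product cited from N\"orlund, which depends on an integral formula and therefore requires a positive radius of convergence.
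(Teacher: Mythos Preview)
Your strategy is sound, but the final simplification fails with the argument $x/(1+x)$ as stated. Since $1+x/(1+x)=(1+2x)/(1+x)$, the three factors combine to
\[
(1+x)^{y}\cdot\Bigl(\frac{x}{1+x}\Bigr)^{j}\cdot\Bigl(\frac{1+2x}{1+x}\Bigr)^{y-j}=x^{j}(1+2x)^{y-j},
\]
so together with the prefactor $(-1)^{j}a_{j}\binom{y}{j}$ the $j$-th summand is $(-1)^{j}\binom{y}{j}a_{j}x^{j}(1+2x)^{y-j}$, not $\binom{y}{j}a_{j}x^{j}$. In fact the identity~\eqref{eq:transform_general} is false as printed: taking $a_{0}=1$ and $a_{k}=0$ for $k\ge 1$ makes every $b_{k}=1$, and the right-hand side becomes $(1+2x)^{y}\neq 1$. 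The paper's own proof silently works with $-x/(1+x)$ throughout, and the specialization to~\eqref{eq:transform} likewise requires the minus sign. With that correction one has $1-x/(1+x)=1/(1+x)$, the three factors collapse to $(-1)^{j}x^{j}$, the sign cancels the $(-1)^{j}$ in front, and your argument goes through cleanly.

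Once the sign is repaired, your proof is correct and takes a genuinely different route from the paper's. The paper first reduces to finitely many nonzero $a_{k}$ to avoid bilateral Laurent series, then introduces an auxiliary indeterminate $z$, writes $b_{k}=[z^{0}]\bigl((-1)^{k}(1-z)^{k}\sum_{j}a_{j}z^{-j}\bigr)$, and in effect simulates the Hadamard-product integral algebraically. You dispense with both devices by interchanging sums directly (your finiteness check for each coefficient of $x^{n}$ is exactly what is needed) and applying the trinomial-revision identity $\binom{y}{k}\binom{k}{j}=\binom{y}{j}\binom{y-j}{k-j}$. Your argument is shorter and more elementary; the paper's stays closer in spirit to the analytic proof it is meant to replace.
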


By definition we have $(1+x)^y=\sum_{k=0}^{\infty}\binom{y}{k}x^k$, which belongs to
$\bigl(\Q[y]\bigr)[[x]]\subseteq\bigl(F[y]\bigr)[[x]]$.
The expected property
$(1+x)^{y_1+y_2}=(1+x)^{y_1}\cdot(1+x)^{y_2}$
holds, and takes place in $\bigl(\Q[y_1,y_2]\bigr)[[x]]$.
For sequences of complex numbers, and with complex parameter in place of the indeterminate $y$ (and with a different sign choice)
Proposition~\ref{prop:transform_general} is~\cite[Proposition~5]{Boy},
but in essence the result traces back to Euler, see~\cite[Equation~(1.20)]{Norlund}.

To be precise, the special version of Proposition~\ref{prop:transform_general} proved in~\cite{Boy}
also requires that $\sum_{k=0}^{\infty}a_k x^k$ has a positive radius of convergence.
This is because that proof amounts to viewing the left-hand side of Equation~\eqref{eq:transform_general}
as the Hadamard (that is, termwise) product of the series
$\sum_{k=0}^{\infty}\binom{y}{k} x^k$ and
$\sum_{k=0}^{\infty}a_k x^k$
and applying a familiar integral formula for it (see~\cite[Chapter~1, Exercise~30]{Com}).
Our purely algebraic approach below bypasses the analytic tool and allows us to attain the greater generality of Proposition~\ref{prop:transform_general}.
For example, one can obtain {\em parametrized} versions by taking $F$ to be a function field.

\begin{proof}[Proof of Proposition~\ref{prop:transform_general}]
Fix an index $N\ge 0$.
If we change all $a_k$ with $k>N$ into zeroes, then
$b_k=\sum_{j=0}^{\infty}(-1)^j\binom{k}{j} a_j$ change accordingly, but $b_k$ remains unchanged for $k\le N$.
The coefficient of $x^N$ in either side of Equation~\eqref{eq:transform_general} is unaffected by this change.
Thus, in proving Equation~\eqref{eq:transform_general} we may assume that $a_k=0$ for $k>N$.
This artifice will save us the trouble of working with bilateral Laurent series, which of course do not form a ring.

Introduce a further indeterminate $z$, so that we have
\[
b_k=\sum_{j=0}^{k}(-1)^j\binom{k}{j} a_j
=[z^0]\biggl((-1)^k(1-z)^k\sum_{j=0}^{N}a_j z^{-j}\biggr)
\]
in the ring $R((z))$ of formal Laurent series over the integral domain $R=\bigl(F[y]\bigr)[[x]]$.
Hence
\begin{align*}
\sum_{k=0}^{\infty}
\binom{y}{k}b_k \left(\frac{-x}{1+x}\right)^k
&=
\sum_{k=0}^{\infty}
\binom{y}{k}\left(\frac{-x}{1+x}\right)^k
\cdot[z^0]
\biggl((1-z)^k\sum_{j=0}^{N}a_j z^{-j}\biggr)
\\&=
[z^0]
\left(
\sum_{k=0}^{\infty}
\binom{y}{k}\left(\frac{x(z-1)}{(1+x)}\right)^k
\sum_{j=0}^{N}a_j z^{-j}
\right)
\\&=
[z^0]
\left(
\left(1+\frac{x(z-1)}{(1+x)}\right)^y
\sum_{j=0}^{N}a_j z^{-j}
\right)
\\&=
(1+x)^{-y}\cdot
[z^0]
\left(
(1+xz)^y
\sum_{j=0}^{N}a_j z^{-j}
\right)
\\&=
(1+x)^{-y}\cdot
\sum_{k=0}^{N}\binom{y}{k} a_k x^k.
\end{align*}
The conclusion follows upon multiplication by $(1+x)^y$.
\end{proof}

Now we show how similar arguments as in Section~\ref{sec:original} allow one to deduce from Equation~\eqref{eq:transform}
a truncated version modulo a prime.
For simplicity of exposition we work with sequences of rational numbers,
but it will be clear that it extends to more general contexts if needed, such as number fields.
Thus, let $q$ be a power of an odd prime $p$, let $a_0,a_1,\ldots,a_{q-1}$ be $p$-integral rational numbers,
and set $b_n=\sum_{k=0}^{n}(-1)^k\binom{n}{k} a_k$
for $0\le n<q-1$.
Then in $\Q[x]$ we have the congruence
\begin{equation}\label{eq:transform_truncated}
\sum_{k=0}^{q-1}\binom{2k}{k} a_k x^k
\equiv(1-4x)^{(q-1)/2}\sum_{k=0}^{q-1}
\binom{2k}{k}b_k \left(\frac{-x}{1-4x}\right)^k
\pmod{p}.
\end{equation}
Equation~\eqref{eq:transform_truncated} is not hard to establish directly, as in~\cite[Section~3]{Tauraso:Fibonacci},
but deriving it from Equation~\eqref{eq:transform} is more in line with the spirit of this paper.
To do that, extend the finite sequence $(a_k)$ to an infinite sequence by setting $a_k=0$ for $k\ge q$,
and let $(b_k)$ be the binomial transform of $(a_k)$.
Hence Equation~\eqref{eq:transform} holds and, consequently,
\[
\sum_{k=0}^{q-1}\binom{2k}{k} a_k x^k
\equiv\frac{1}{\sqrt{1-4x}}\sum_{k=0}^{q-1}
\binom{2k}{k}b_k \left(\frac{-x}{1-4x}\right)^k
\pmod{(x^q,p)}.
\]
Using
$(1-4x)^{-1/2}
\equiv
(1-4x)^{(q-1)/2}(1-4x^q)^{-1/2}
\pmod{p}$
as in the proof of Lemma~\ref{lemma:central_pol}
we obtain that Equation~\eqref{eq:transform_truncated} holds modulo $(x^q,p)$.
However, because $\binom{2k}{k}\equiv 0\pmod{p}$ for $q<2k<2q$ we may restrict the summation range in the right-hand side
of Equation~\eqref{eq:transform_truncated} to $0\le(q-1)/2$, thus making both sides polynomials of degree less than $p$,
whence Equation~\eqref{eq:transform_truncated} holds modulo $p$, as desired.

\section{Some generating series evaluated in terms of polylogarithms}\label{sec:series}

In this section we recall a few known generating series involving polylogarithms, and introduce some new ones,
towards the goal of truncating them and obtaining polynomial congruences modulo a prime in Section~\ref{sec:polynomials}.
All our series will be generating series of sequences of the general form
$\binom{2k}{k}a_k$
or
$C_ka_k$,
where $a_k$ might be $1/k^d$, or a generalized harmonic number
$H_k^{(d)}=\sum_{j=1}^{k}1/j^d$
(where the ordinary harmonic numbers are $H_k=H_k^{(1)}$),
or possibly a linear combination of products of them.
For the sake of classification we will informally call {\em the level} of such a generating series
the highest power of $k$ which occurs in the denominator of $a_k$ once expanded.
Some such generating series can be evaluated in closed form in terms of polylogarithmic series
\begin{equation}\label{eq:Li_d}
\Li_d(x)=\sum_{k=1}^{\infty}\frac{x^k}{k^d}
\end{equation}
with $d$ equal to the level of the series,
and the ordinary logarithmic series.
However, we will systematically write $\Li_1(x)$ in place of the equivalent notation $-\log(1-x)$
in our formulas.
This will be more natural in view of deducing congruences modulo a prime in Section~\ref{sec:polynomials}.

We may and will work in a formal setting, viewing all series involved as formal power series, say in $\Q[[x]]$ or $\C[[x]]$,
as subsequent evaluations inside the disk of convergence pose no challenge when desired.
Thus, polylogarithms $\Li_d(x)$ will be simply defined by Equation~\eqref{eq:Li_d} for any integer $d$,
and no issue of analytic continuation will arise.
Note that $\Li_0(x)=x/(1-x)$.
Differentiation and integration will be done on a formal level in $\Q[[x]]$.
In particular, note that
\[
x\cdot\frac{\mathrm{d}}{\mathrm{d}x}\Li_d(x)=\Li_{d-1}(x).
\]

Because (a slight variant of) the generating function of the Catalan numbers will occur repeatedly
we conveniently assign a name to it, namely,
\begin{equation}\label{eq:beta}
\beta:=
\sum_{k=0}^\infty C_kx^{k+1}
=
\frac{1-\sqrt{1-4x}}{2}.
\end{equation}
It satifies $\beta(1-\beta)=x$.
The generating series of the central binomial coefficients can also be expressed in terms of $\beta$, namely,
$\sum_{k=0}^{\infty}\binom{2k}{k}x^k
=1/(1-2\beta)
=\mathrm{d}\beta/\mathrm{d}x$.
According to our terminology these two series are the generating series of level zero.
Series with general term $C_ka_k$ can be obtained by integration from the corresponding ones with term $\binom{2k}{k}a_k$,
and so our initial focus will be on the latter.
Integration of the corresponding closed forms will present no obstacle in the cases of our concern.
Our notation for indefinite integration will assume that the arbitrary constant involved will be suitably adjusted,
allowing us to write $\int(1-2\beta)^{-1}\mathrm{d}x=\beta$, for example.

From each series of level $d$ one can obtain another series of the same level by an application of the involutory transform described by Equation~\eqref{eq:transform}.
Going from a series of level $d$ to a series of level $d+1$ may be achieved by integration.
Thus, the easiest generating series of level one is obtained by integrating the generating series of the central binomial coefficients divided by $x$.
As pointed out in~\cite[Equation~(6)]{Lehmer:central_binomial}, this yields
\begin{equation}\label{B4}
\sum_{k=1}^{\infty}\binom{2k}{k}\frac{x^k}{k}=
-2\log(1-\beta)=2\Li_1\left(\beta\right).
\end{equation}
An application of the transform of Equation~\eqref{eq:transform} turns this into another series of level one,
obtained by Boyadzhiev in~\cite[Theorem~1]{Boy}.
In our notation in terms of $\beta$, that result concisely reads
\begin{equation}\label{B1}
\sum_{k=1}^{\infty}\binom{2k}{k}H_kx^k=
\frac{-2}{1-2\beta}\Li_1\left(\frac{\beta}{2\beta-1}\right)
=
2\frac{\Li_1(2\beta)-\Li_1(\beta)}{1-2\beta}.
\end{equation}
These are just two of several equivalent expressions for this series,
due to the functional equation $\log(1+x+y+xy)=\log(1+x)\log(1+y)$.
The former expression may appear more convenient due to the single occurrence of $\Li_1$,
but the latter will soon prove to be more compatible with analogues of higher level,
and more amenable to reduction modulo a prime in the next section.

Our collection of series of level one is completed with two corresponding series involving the Catalan numbers.
The analogue of Equation~\eqref{B4}, which reads
\begin{equation}\label{M1}
\sum_{k=1}^{\infty}C_k\frac{x^{k+1}}{k}
=
\sum_{k=1}^{\infty}\binom{2k}{k}\frac{x^{k+1}}{k}
-
\sum_{k=1}^{\infty}C_k x^{k+1}
=
2x\Li_1(\beta)-\beta+x,
\end{equation}
is obtained from Equations~\eqref{B4} and~\eqref{eq:beta}
noting that
$1/\bigl(k(k+1)\bigr)=1/k-1/(k+1)$.
Alternatively, Equation~\eqref{M1} can be found by integrating Equation~\eqref{B4}.
Finally, integrating Equation~\eqref{B1} in either form we obtain
\begin{equation}\label{B5}
\begin{aligned}
\sum_{k=1}^{\infty}C_kH_kx^{k+1}
&=
\Li_1(\beta)+(1-2\beta)\Li_1\left(\frac{\beta}{2\beta-1}\right)
\\&=
-(1-2\beta)\Li_1(2\beta)+2(1-\beta)\Li_1(\beta),
\end{aligned}
\end{equation}
which is equivalent to~\cite[Corollary~2]{Boy}.
We should mention that Equation~\eqref{B5} can also be obtained from Equation~\eqref{M1} through an application of the transform expressed by Equation~\eqref{eq:transform}.

Boyadzhiev also produced closed forms for some generating series of level two
in~\cite[Proposition~6 and Corollary~7]{Boy},
which we can state in the more compact forms
\begin{equation}\label{B3}
\sum_{k=1}^{\infty}\binom{2k}{k}\frac{x^k}{k^2}
=2\Li_2\left(\beta\right)
-\Li_1(\beta)^2,
\end{equation}
and
\begin{equation}\label{B2}
\sum_{k=1}^{\infty}\binom{2k}{k}\frac{H_k}{k}x^k=
-2\Li_2\left(\frac{\beta}{2\beta-1}\right)
-\Li_1\left(\frac{\beta}{2\beta-1}\right)^2.
\end{equation}
They are obtained by integrating Equations~\eqref{B4} and~\eqref{B1} after division by $x$.
Companion generating series with the Catalan numbers may be readily obtained using
$1/\bigl(k(k+1)\bigr)=1/k-1/(k+1)$.

In the next result we contribute two further generating series of level two,
which involve the generalized harmonic numbers $H_k^{(2)}$.

\begin{theorem}
We have
\begin{equation}\label{S2}
\sum_{k=1}^{\infty}\binom{2k}{k}H_k^{(2)} x^k
=
\frac{2\Li_2\left(\beta\right)
+\Li_1(\beta)^2}{1-2\beta},
\end{equation}
and
\begin{equation}\label{S3}
\sum_{k=1}^{\infty}C_kH_k^{(2)}x^{k+1}=
2\beta\,\Li_2\left(\beta\right)
-(1-\beta)\Li_1(\beta)^2.
\end{equation}
\end{theorem}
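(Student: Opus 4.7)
The two identities are linked by a single integration: since $(k+1)C_k=\binom{2k}{k}$, the series $F(x)$ and $G(x)$ on the left-hand sides of \eqref{S2} and \eqref{S3} satisfy $G'(x)=F(x)$, so it suffices to establish \eqref{S2} and deduce \eqref{S3}. For the deduction, write $G(x)=\int_0^x F(t)\,dt$ and change variable via $t=u(1-u)$, $dt=(1-2u)\,du$; together with \eqref{S2} this produces $G(x)=\int_0^\beta\bigl(2\Li_2(u)+\Li_1(u)^2\bigr)\,du$, which evaluates to $2\beta\Li_2(\beta)-(1-\beta)\Li_1(\beta)^2$ after integrating each summand by parts (using the elementary antiderivative $\int\Li_1(u)\,du=u-(1-u)\Li_1(u)$).

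To prove \eqref{S2}, derive a first-order linear ODE for $F$ from its series definition. Substituting $k\binom{2k}{k}=2(2k-1)\binom{2k-2}{k-1}$ in $F'(x)$, shifting the summation index, and splitting $H_{k+1}^{(2)}=H_k^{(2)}+1/(k+1)^2$, the $H_k^{(2)}$-terms reorganise into $4xF'(x)+2F(x)$, leaving
\[
(1-4x)F'(x)-2F(x)=\sum_{k\ge 0}\frac{2(2k+1)}{(k+1)^2}\binom{2k}{k}x^k.
\]
The decomposition $2(2k+1)/(k+1)^2=4/(k+1)-2/(k+1)^2$ identifies the right-hand side with $4\beta/x-(2/x)\int_0^x\beta(t)/t\,dt$ (using $\sum_{k\ge 0}C_kx^k=\beta/x$). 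The substitution $t=u(1-u)$ turns the integral into $\int_0^\beta(1-2u)/(1-u)\,du=2\beta-\Li_1(\beta)$, so the ODE simplifies to $(1-4x)F'(x)-2F(x)=2\Li_1(\beta)/x$.

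Finally, changing to $\beta$ as the independent variable via $d/d\beta=(1-2\beta)\,d/dx$ and $1-4x=(1-2\beta)^2$ collapses the ODE to
\[
\frac{d}{d\beta}\bigl[(1-2\beta)F\bigr]=\frac{2\Li_1(\beta)}{\beta(1-\beta)}=\frac{2\Li_1(\beta)}{\beta}+\frac{2\Li_1(\beta)}{1-\beta}.
\]
Integrating from $0$ to $\beta$ against $\Li_2'(u)=\Li_1(u)/u$ and $(\Li_1^2)'(u)=2\Li_1(u)/(1-u)$, and using $F(0)=0$, yields $(1-2\beta)F=2\Li_2(\beta)+\Li_1(\beta)^2$, which is \eqref{S2}. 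The main obstacle is the clean extraction of the source term $2\Li_1(\beta)/x$ in the ODE; once the key evaluation $\int_0^x\beta(t)/t\,dt=2\beta-\Li_1(\beta)$ is in hand (via $t=u(1-u)$), the remainder of the argument on both sides is antidifferentiation of elementary polylogarithmic expressions.
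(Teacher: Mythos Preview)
Your proof is correct, but your route to \eqref{S2} is genuinely different from the paper's. The paper derives \eqref{S2} by applying the involutory sequence transform \eqref{eq:transform} to Boyadzhiev's identity \eqref{B2}, using the summation-by-parts fact $\sum_{j=1}^{k}(-1)^{j}\binom{k}{j}H_j^{(2)}=-H_k/k$; thus \eqref{S2} is obtained as the transform-image of an already known closed form. You instead work intrinsically: from the recursion $k\binom{2k}{k}=2(2k-1)\binom{2k-2}{k-1}$ you extract the first-order ODE $(1-4x)F'-2F=2\Li_1(\beta)/x$, and the change of variable $x\mapsto\beta$ collapses this to $\tfrac{d}{d\beta}\bigl[(1-2\beta)F\bigr]=2\Li_2'(\beta)+(\Li_1^2)'(\beta)$, which integrates at once. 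Your approach is self-contained and does not rely on \eqref{B2} or the transform machinery of Section~\ref{sec:transform}; the paper's approach, on the other hand, makes the structural link between \eqref{S2} and \eqref{B2} transparent and fits the transform theme developed there. For \eqref{S3} both arguments coincide: integrate \eqref{S2} and evaluate $\int_0^\beta\bigl(2\Li_2(u)+\Li_1(u)^2\bigr)\,du$ by parts.
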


\begin{proof}
Note that
\[
\sum_{j=1}^{k}\binom{k}{j}(-1)^j H_j^{(2)}=-\frac{H_k}{k},
\]
according to an easy summation by parts.
Now Equation~\eqref{S2} can be obtained from
Equation~\eqref{B2} through the transform of Equation~\eqref{eq:transform}, as follows:
\begin{equation*}
\begin{split}
\sum_{k=1}^{\infty}\binom{2k}{k}H_k^{(2)} x^k
&=\frac{1}{1-2\beta}
\sum_{k=1}^{\infty}\binom{2k}{k}\left(\frac{-x}{1-4x}\right)^k
\sum_{j=1}^{k}\binom{k}{j}(-1)^j H_j^{(2)}\\
&=-\frac{1}{1-2\beta}
\sum_{k=1}^{\infty}\binom{2k}{k}\frac{H_k}{k}\left(\frac{-x}{1-4x}\right)^k\\
&=
\frac{2\Li_2\left(\beta\right)+\Li_1(\beta)^2}{1-2\beta}.
\end{split}
\end{equation*}

Equation~\eqref{S3} can be obtained from Equation~\eqref{S2} through integration.
In fact, integration by parts yields
\[
\int\frac{\Li_2(\beta)}{1-2\beta}\,\mathrm{d}x
=
\int\Li_2(\beta)\mathrm{d}\beta
=
\beta\Li_2(\beta)+(1-\beta)\Li_1(\beta)-\beta
\]
and
\[
\int\frac{\Li_1(\beta)^2}{1-2\beta}\,\mathrm{d}x
=
\int\Li_1(\beta)^2\,\mathrm{d}\beta
=
(\beta-1)\Li_1(\beta)^2+2(\beta-1)\Li_1(\beta)+2\beta,
\]
whence Equation~\eqref{S3} follows.
\end{proof}

Finally, we evaluate one generating series of level three in closed form, where two suitable summands are combined.

\begin{theorem}
We have
\begin{equation}\label{S4}
\sum_{k=1}^{\infty}
\binom{2k}{k}\left(\frac{H_k^{(2)}}{k}+\frac{1}{k^3}\right)x^k=
4\,\Li_3\left(\beta\right)
+\frac{2}{3}\Li_1(\beta)^3.
\end{equation}
\end{theorem}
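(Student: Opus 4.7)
The plan is to derive Equation~\eqref{S4} by the same differentiation-and-integration strategy that produced Equations~\eqref{S2} and~\eqref{S3}, now using \emph{both} of Equations~\eqref{S2} and~\eqref{B3} as input. The key observation is that applying the operator $x\,\mathrm{d}/\mathrm{d}x$ to the left-hand side of Equation~\eqref{S4} strips one power of $k$ from each summand, turning the factor $H_k^{(2)}/k+1/k^3$ into $H_k^{(2)}+1/k^2$. Consequently
\[
x\frac{\mathrm{d}}{\mathrm{d}x}\sum_{k=1}^{\infty}\binom{2k}{k}\left(\frac{H_k^{(2)}}{k}+\frac{1}{k^3}\right)x^k
=\sum_{k=1}^{\infty}\binom{2k}{k}H_k^{(2)}x^k+\sum_{k=1}^{\infty}\binom{2k}{k}\frac{x^k}{k^2},
\]
and each summand on the right is already evaluated in closed form by Equations~\eqref{S2} and~\eqref{B3}.

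Next I would combine those two closed forms over the common denominator $1-2\beta$; a short calculation gives
\[
\frac{2\Li_2(\beta)+\Li_1(\beta)^2}{1-2\beta}+2\Li_2(\beta)-\Li_1(\beta)^2
=\frac{4(1-\beta)\Li_2(\beta)+2\beta\Li_1(\beta)^2}{1-2\beta}.
\]
To recover the LHS itself I divide by $x$ and integrate with respect to $x$. Using $x=\beta(1-\beta)$ and $\mathrm{d}x=(1-2\beta)\,\mathrm{d}\beta$, the change of variable produces the identity $\mathrm{d}x/\bigl(x(1-2\beta)\bigr)=\mathrm{d}\beta/\bigl(\beta(1-\beta)\bigr)$, and the denominator $\beta(1-\beta)$ cancels exactly against the factors $(1-\beta)$ and $\beta$ sitting in the numerator above. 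What survives is
\[
\left[\frac{4\,\Li_2(\beta)}{\beta}+\frac{2\,\Li_1(\beta)^2}{1-\beta}\right]\mathrm{d}\beta,
\]
and each term integrates in closed form using the elementary relations $\mathrm{d}\Li_3(\beta)/\mathrm{d}\beta=\Li_2(\beta)/\beta$ and $\mathrm{d}\Li_1(\beta)/\mathrm{d}\beta=1/(1-\beta)$, producing $4\,\Li_3(\beta)+\tfrac{2}{3}\Li_1(\beta)^3$. The constant of integration is pinned down to $0$ by comparing values at $x=\beta=0$.

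No serious obstacle is expected; the argument runs parallel to the derivation of Equation~\eqref{S3}. The only point worth highlighting is the role of the specific linear combination $H_k^{(2)}/k+1/k^3$ in the statement: it is precisely this combination that makes $(1-\beta)$ and $\beta$ appear in the numerator of $x\,\mathrm{d}\,\text{LHS}/\mathrm{d}x$ with exactly the coefficients needed to cancel the factor $\beta(1-\beta)$ coming from $\mathrm{d}x/x$, leaving an integrand expressible purely through $\Li_2(\beta)/\beta$ and $\Li_1(\beta)^2/(1-\beta)$. This presumably explains why the theorem is phrased for the combined series rather than for either summand separately.
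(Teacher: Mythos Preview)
Your proposal is correct and essentially identical to the paper's proof: both add Equations~\eqref{S2} and~\eqref{B3}, divide by $x$, and integrate, observing that the combination $H_k^{(2)}/k+1/k^3$ is exactly what makes the resulting integrand split into the derivatives of $\Li_3(\beta)$ and $\Li_1(\beta)^3$. The only cosmetic difference is that the paper keeps $x$ as the integration variable and invokes the chain-rule identities $\tfrac{\mathrm{d}}{\mathrm{d}x}\Li_d(\beta)=\Li_{d-1}(\beta)/\bigl(\beta(1-2\beta)\bigr)$ and $\tfrac{\mathrm{d}}{\mathrm{d}x}\Li_1(\beta)^d=d\,\Li_1(\beta)^{d-1}/\bigl((1-\beta)(1-2\beta)\bigr)$, whereas you substitute $\mathrm{d}x=(1-2\beta)\,\mathrm{d}\beta$ explicitly; the resulting integrands and antiderivatives coincide.
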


\begin{proof}
We start with noting that for $d\geq 1$ we have
\[
\frac{\mathrm{d}}{\mathrm{d}x}\Li_d(\beta)=
\frac{\Li_{d-1}(\beta)}{\beta(1-2\beta)},
\quad\text{and}\quad
\frac{\mathrm{d}}{\mathrm{d}x}\Li_1(\beta)^d=
\frac{d\Li_1(\beta)^{d-1}}{(1-\beta)(1-2\beta)}.
\]
Adding up the left-hand sides of Equations~\eqref{S2} and~\eqref{B3},
dividing by $x$ and integrating, yields
\begin{align*}
\sum_{k=1}^{\infty}
\binom{2k}{k}\left(\frac{H_k^{(2)}}{k}+\frac{1}{k^3}\right)x^k&=
\int \left(\frac{2\Li_2\left(\beta\right)
+\Li_1(\beta)^2}{x(1-2\beta)}+
 \frac{2\Li_2(\beta)
-\Li_1(\beta)^2}{x}\right)\,\mathrm{d}x\\
&=4\int \frac{\Li_2(\beta)}{\beta(1-2\beta)}\,\mathrm{d}x
 +2\int \frac{\Li_1(\beta)^2}{(1-\beta)(1-2\beta)}\,\mathrm{d}x\\
&=4\,\Li_3\left(\beta\right)
+\frac{2}{3}\Li_1(\beta)^3,
\end{align*}
as claimed.
\end{proof}

\section{Congruences for finite polylogarithms}\label{sec:polylog}

In Section~\ref{sec:polynomials} we will obtain congruences for finite sums modulo a prime $p$ from the generating series found in Section~\ref{sec:series}.
Because each of those involves some polylogarithm $\Li_d$, we give here a brief introduction to their finite analogues $\pounds_d$.
For a fixed prime $p$, the {\em finite polylogarithms} can be defined by truncating the polylogarithmic series just before the term of degree $p$, namely,
\begin{equation}\label{eq:L_d}
\pounds_d(x)=\sum_{k=1}^{p-1}\frac{x^k}{k^d}.
\end{equation}
Although they will be mostly viewed modulo $p$, hence over the field $\F_p$,
there is an advantage in having them defined as polynomials with rational coefficients.
Thus, we have
$\pounds_d(x)\equiv\Li_d(x)\pmod{x^p}$
in the power series ring $\Q[[x]]$ and, in particular,
$\pounds_1(x)\equiv -\log(1-x)\pmod{x^p}$.
Note also that $\pounds_0(x)=(x-x^p)/(1-x)$.

When $p=2$ we have $\pounds_d(x)=x$ for all $d$, which is not very interesting.
Nor is the behaviour of the central binomial coefficients $\binom{2k}{k}$ or the Catalan numbers $C_k$
when viewed modulo $2$, as we already pointed out right after stating Theorems~\ref{thm:central_pol} and~\ref{thm:Catalan_pol}.
Thus, we set the blanket assumption $p>2$ in what follows
and leave to the interested reader the task of checking what remains true or fails in that case.
More generally, because
$\pounds_{d+p-1}(x)\equiv\pounds_d(x)\pmod{p}$,
an assumption $0<d<p$ would not be too demanding in most places, but we need not require that from the outset.

We will need the congruence
\begin{equation}\label{eq:Mirimanoff}
\pounds_1(x)^d\equiv(-1)^{d-1}d!\cdot\pounds_d(1-x)
\pmod{(x^p,p)},
\end{equation}
valid for $0<d<p-1$,
which is a weaker version of a more precise congruence, modulo $(x^{p+1},p)$,
tracing back to Mirimanoff and involving a Bernoulli number.
A proof of that, with a further discussion,
can be found in~\cite[Lemma~3.2]{MatTau:polylog}.

For small values of $d$, Equation~\eqref{eq:Mirimanoff} can be refined to a congruence modulo $p$, namely,
\begin{align}
\label{eq:L^1}
\pounds_1(x)
&\equiv
\pounds_1(1-x)\pmod{p},
\\\label{eq:L^2}
\pounds_1(x)^2/2
&\equiv -x^p\pounds_2(x)-(1-x^p)\pounds_2(1-x) \pmod{p},
\\\label{eq:L^3}
\pounds_1(x)^3/6
&\equiv
x^p\pounds_3(x)+(1-x^p)\pounds_3(1-x)+x^{2p}(1-x^p)\pounds_3(1-1/x)
\\&\notag
\quad
+(2/3)x^p(1-x^p)\pounds_3(-1) \pmod{p};
\end{align}
the second congruence clearly requires $p>2$, and the third one $p>3$.
As explained in~\cite[Section~3]{MatTau:polylog}, these congruences have been recently rediscovered by several authors,
but they were already known to Mirimanoff~\cite[p.~61]{Mirimanoff}.
Equation~\eqref{eq:L^1} is a plain consequence of
\begin{equation}\label{eq:Q}
\pounds_1(x)\equiv\frac{-x^{p}-(1-x)^p+1}{p}\pmod{p},
\end{equation}
which is easily proved by expanding $(1-x)^p$ and using the fact that
$\binom{p}{k}=\frac{p}{k}\binom{p-1}{k-1}\equiv(-1)^{k-1}p\pmod{p^2}$ for $0<k<p$.
The invariance of $\pounds_1(x)$ under the substitution $x\mapsto 1-x$, due to Equation~\eqref{eq:L^1},
combines with invariance under another involutory transformation, expressed by the obvious congruence
$\pounds_1(x)\equiv -x^p\pounds_1(1/x)\pmod{p}$,
to give invariance under a certain (celebrated) group $G$ of transformations, isomorphic with the symmetric group on three letters.
In~\cite[Section~3]{MatTau:polylog} we showed how this symmetry group allows one to lift a proof of
Equations~\eqref{eq:L^2} and~\eqref{eq:L^3}
from the fact that they hold modulo $(x^{p+1},p)$
according to Mirimanoff's slightly more precise version of Equation~\eqref{eq:Mirimanoff}
given in~\cite[Lemma~3.2]{MatTau:polylog}.

By suitably composing the two involutory transformations of $\pounds_1(x)$ given above we find
\[
\pounds_1(x)
\equiv\pounds_1(1-x)
\equiv (x-1)^p\pounds_1\left(\frac{1}{1-x}\right)
\equiv (x-1)^p\pounds_1\left(\frac{x}{x-1}\right)
\pmod{p},
\]
which corresponds to invariance under the third involution in the mentioned symmetry group $G$.
The congruence between the first and last expressions may be viewed as a version for $\pounds_1(x)$ of the property
\begin{equation}\label{eq:log-}
-\log(1-x)=\log\left(1-\frac{x}{x-1}\right)
\end{equation}
of the logarithmic series;
in fact, viewing both sides of the latter modulo $(x^p,p)$
yields the former modulo $(x^p,p)$.
It turns out that the fundamental two-variable functional equation for the logarithmic series,
which we may write in the form
\begin{equation}\label{eq:log}
-\log(1-x)+\log(1-y)=\log\left(\frac{1-y}{1-x}\right)
\end{equation}
and has Equation~\eqref{eq:log-} as its special case for $y=1$,
also has analogue for $\pounds_1(x)$, namely,
\begin{equation}\label{eq:4-term}
\pounds_1(x)-\pounds_1(y)
+x^p\pounds_1\left(\frac{y}{x}\right)
+(1-x)^p\pounds_1\left(\frac{1-y}{1-x}\right)
\equiv 0\pmod{p}.
\end{equation}
Because of the polynomials appearing as denominators this may be viewed in the ring of power series $\Z_p[[x,y]]$,
where $\Z_p$ denotes the ring of $p$-adic integers,
but it actually takes place in the polynomial ring $\Z_p[x,y]$ after cancellation.
It seems to have been first noticed by Kontsevich~\cite{Kontsevich} in a more rudimentary form,
where $\pounds_1$ is viewed as a map from $\F_p$ to itself,
in order to point out an analogy of $\pounds_1$ with the {\em binary entropy function} of information theory.
Kontsevich sketched a proof of his version of Equation~\eqref{eq:4-term},
which amounts to Equation~\eqref{eq:4-term} viewed modulo the ideal $(x^p-x,y^p-y,p)$ of $\Z_p[x,y]$,
by direct calculation after expanding the various $\pounds_1$ involved.
That argument can certainly be upgraded to a full proof of Equation~\eqref{eq:4-term},
which, however, would not be too illuminating.
Elbaz-Vincent and Gangl, in a much more general scheme (greatly motivated by~\cite{Kontsevich}) where
functional equations for $\pounds_d$ are deduced from functional equations for $\Li_{d+1}$,
gave another proof of Equation~\eqref{eq:4-term} in~\cite[Proposition~5.9]{EVG:polyanalogsI}.
While that proof is certainly clarifying, understanding it requires mastering a large part of their paper.

We now give two elementary and self-contained new proofs of Equation~\eqref{eq:4-term}.
The former is slightly longer but more in line with the spirit of this paper, as it emphasizes and exploits the connection with Equation~\eqref{eq:log},
the functional equation for the ordinary logarithm.

\begin{proof}[First proof of Equation~\eqref{eq:4-term}]
Viewing Equation~\eqref{eq:log} modulo the ideal $(x,y)^p$ of $\Q[[x,y]]$, rewriting in terms of $\pounds_1$, and then reducing modulo $p$,
we find
\[
\pounds_1(x)-\pounds_1(y)
+\pounds_1\left(1-\frac{1-y}{1-x}\right)
\equiv 0\pmod{((x,y)^p,p)}
\]
in $\Z_p[[x,y]]$.
Rewriting the last summand using $\pounds_1(1-z)\equiv\pounds_1(z)\pmod{p}$
we conclude that Equation~\eqref{eq:4-term} holds modulo $((x,y)^p,p)$.
Because the left-hand side of Equation~\eqref{eq:4-term} is a polynomial of degree not exceeding $p$,
the residual indeterminacy about terms of degree exactly $p$
could be abundantly resolved using the invariance of $\pounds_1$ under the group $G$ of transformations,
by argument similar to those in~\cite{MatTau:polylog}.
However, a simple alternative is checking that the terms of degree $p$ in Equation~\eqref{eq:4-term}
cancel out modulo $p$.
In fact, the homogeneous part of degree $p$ in the polynomial
\[
(1-x)^p\pounds_1\left(\frac{1-y}{1-x}\right)
=
\sum_{k=1}^{p-1}\frac{(1-x)^{p-k}(1-y)^k}{k}
\]
equals
$(-1)^p\sum_{k=1}^{p-1}x^{p-k}y^k/k
\equiv -x^p\pounds_1(y/x)\pmod{p}$.
\end{proof}

\begin{proof}[Second proof of Equation~\eqref{eq:4-term}]
Applying Equation~\eqref{eq:Q} to each of the four summands of Equation~\eqref{eq:4-term} yields, in particular,
\[
x^p\pounds_1\left(\frac{y}{x}\right)\equiv\frac{-y^{p}-(x-y)^p+x^p}{p}\pmod{p},
\]
which is a sort of homogeneous version of Equation~\eqref{eq:Q}, and
\[
(1-x)^p\pounds_1\left(\frac{1-y}{1-x}\right)\equiv\frac{-(1-y)^{p}-(y-x)^p+(1-x)^p}{p}\pmod{p}.
\]
Because $(-1)^p\equiv -1\pmod{p}$ and $(1-x)^p\equiv 1-x^p\pmod{p}$,
all summands in Equation~\eqref{eq:4-term} cancel out, as desired.
\end{proof}

\section{Obtaining polynomial congruences by truncation}\label{sec:polynomials}

After these preliminaries on finite polylogarithms we proceed with producing analogues with finite sums modulo $p$
from the generating series considered in Section~\ref{sec:series}.
We start by deducing three congruences involving central binomial coefficients, of level one, two, and three,
from the corresponding generating series, which are
Equations~\eqref{B4}, \eqref{B3}, and ~\eqref{S4}.
We collect them together because of the similarity of their right-hand sides.
Because of integers coprime with $p$ appearing as denominators we conveniently state those congruences, and all congruences to follow,
in the polynomial ring $\Z_p[\beta]$, where $\Z_p$ is the ring of $p$-adic integers and $x=\beta(1-\beta)$.

\begin{theorem}\label{thm:2kk}
Let $p>3$ be a prime.
The following congruences hold in the polynomial ring $\Z_p[\beta]$, where $x=\beta(1-\beta)$ and $\alpha=1-\beta$:
\begin{equation}\label{N1}
\sum_{k=1}^{p-1}\binom{2k}{k}\frac{x^k}{k}\equiv
\pounds_1(\alpha)+\pounds_1(\beta)\pmod{p},
\end{equation}
\begin{equation}\label{N2}
\sum_{k=1}^{p-1}\binom{2k}{k}\frac{x^k}{k^2}\equiv
2\pounds_2(\alpha)+2\pounds_2(\beta)\pmod{p},
\end{equation}
\begin{equation}\label{N3}
\sum_{k=1}^{p-1}\binom{2k}{k}\left(\frac{H_k^{(2)}}{k}+\frac{1}{k^3}\right)x^k\equiv
4\pounds_3(\alpha)+4\pounds_3(\beta)\pmod{p}.
\end{equation}
\end{theorem}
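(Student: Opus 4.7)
The plan is to handle the three congruences uniformly, working in $\Q[[\beta]]$ with $x=\beta(1-\beta)$ and comparing both sides of the generating series identities \eqref{B4}, \eqref{B3}, \eqref{S4} modulo $(\beta^p,p)$. The key observation is that although the tails $\sum_{k\ge p}\binom{2k}{k}a_k x^k$ have non-$p$-integral coefficients, each is divisible by $\beta^p$ in $\Q[[\beta]]$, because $x^k=\beta^k(1-\beta)^k$ has $\beta$-adic order $k\ge p$. Hence, writing $T(\beta)$ for the truncated sum $\sum_{k=1}^{p-1}\binom{2k}{k}a_k x^k$ and $F(\beta)$ for the right-hand side of the generating series, I obtain $T(\beta)\equiv F(\beta)\pmod{\beta^p}$ in $\Q[[\beta]]$, with all coefficients now $p$-integral.

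On the right-hand side I next replace every $\Li_d(\beta)$ by $\pounds_d(\beta)$, which is legitimate modulo $\beta^p$ since $\Li_d(\beta)-\pounds_d(\beta)=\sum_{k\ge p}\beta^k/k^d$ has $\beta$-order $p$; the same goes for $\Li_1(\beta)^2$ and $\Li_1(\beta)^3$. Reducing modulo $p$ yields, for \eqref{N1}, \eqref{N2}, \eqref{N3} respectively,
\[
T(\beta)\equiv 2\pounds_1(\beta),\quad 2\pounds_2(\beta)-\pounds_1(\beta)^2,\quad 4\pounds_3(\beta)+\tfrac{2}{3}\pounds_1(\beta)^3
\pmod{(\beta^p,p)}.
\]
To bring these into the symmetric forms of the theorem I invoke the Mirimanoff-type congruences \eqref{eq:L^1}, \eqref{eq:L^2}, \eqref{eq:L^3} with $x=\beta$: reducing them modulo $\beta^p$, the factors $\beta^p$ and $\beta^{2p}$ annihilate all but one summand on each right-hand side, leaving
\[
\pounds_1(\beta)\equiv\pounds_1(\alpha),\quad \pounds_1(\beta)^2\equiv -2\pounds_2(\alpha),\quad \pounds_1(\beta)^3\equiv 6\pounds_3(\alpha)
\pmod{(\beta^p,p)}.
\]
Substituting produces the symmetric right-hand sides $\pounds_1(\alpha)+\pounds_1(\beta)$, $2\pounds_2(\alpha)+2\pounds_2(\beta)$, $4\pounds_3(\alpha)+4\pounds_3(\beta)$.

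Finally, since $\binom{2k}{k}\equiv 0\pmod p$ for $(p-1)/2<k<p$, the reduction of $T(\beta)$ modulo $p$ is a polynomial in $\beta$ of degree at most $p-1$, as is each of the symmetric right-hand sides. A congruence modulo $(\beta^p,p)$ between two such polynomials of degree less than $p$ is an equality in $\F_p[\beta]$, which translates to the stated congruences modulo $p$ in $\Z_p[\beta]$. The only real subtlety is the choice of $\beta$ rather than $x$ as the formal indeterminate; this is exactly what allows the non-$p$-integral tails of the generating series to be absorbed into the ideal $(\beta^p)$, after which the Mirimanoff reductions modulo $\beta^p$ match the symmetric form of the theorem with no further computation.
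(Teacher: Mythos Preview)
Your proof is correct and follows essentially the same approach as the paper's own proof: truncate the generating series identities \eqref{B4}, \eqref{B3}, \eqref{S4} modulo $(\beta^p,p)$, replace powers of $\pounds_1(\beta)$ by $\pounds_d(1-\beta)$ via the Mirimanoff congruence, and then observe that both sides are polynomials of degree less than $p$ so the congruence lifts to one modulo $p$. The only cosmetic difference is that the paper truncates the left-hand side at $k=(p-1)/2$ from the outset (so the degree bound is immediate), whereas you truncate at $k=p-1$ and invoke $\binom{2k}{k}\equiv 0\pmod{p}$ for $(p-1)/2<k<p$ at the end; and the paper cites the generic Mirimanoff congruence \eqref{eq:Mirimanoff} directly rather than reducing \eqref{eq:L^1}--\eqref{eq:L^3} modulo $\beta^p$.
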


\begin{proof}
To prove Equation~\eqref{N1} we start with writing Equation~\eqref{B4} in the equivalent form
\[
\sum_{k=1}^{\infty}\binom{2k}{k}\frac{\bigl(\beta(1-\beta)\bigr)^k}{k}=
2\Li_1\left(\beta\right),
\]
in the power series ring $\Q[[\beta]]$.
Because $\Li_d(x)\equiv\pounds_d(x)\pmod{x^{p}}$ we have
\[
\sum_{k=1}^{(p-1)/2}\binom{2k}{k}\frac{\bigl(\beta(1-\beta)\bigr)^k}{k}
\equiv
2\pounds_1\left(\beta\right)
\pmod{\beta^p}
\]
in $\Q[[\beta]]$.
Now both sides are polynomials with $p$-integral rational coefficients, and hence can be viewed modulo $p$.
Thus, the same congruence holds modulo the ideal $(\beta^p,p)$ of $\Z_p[\beta]$.
Because both sides are polynomials of degree less than $p$, the congruence actually holds modulo $p$
(that is, modulo the ideal $(p)$ of $\Z_p[\beta]$).
This is equivalent to the more symmetric Equation~\eqref{N1} because $\pounds_1(1-\beta)\equiv\pounds_1(\beta)\pmod{p}$
according to Equation~\eqref{eq:L^1}.

To prove Equation~\eqref{N2} we proceed similarly, writing Equation~\eqref{B3} in the equivalent form
\[
\sum_{k=1}^{\infty}\binom{2k}{k}\frac{\bigl(\beta(1-\beta)\bigr)^k}{k^2}
=2\Li_2\left(\beta\right)
-\Li_1(\beta)^2,
\]
in $\Q[[\beta]]$.
Truncating the series and using Equation~\eqref{eq:Mirimanoff} we deduce the congruence
\[
\sum_{k=1}^{(p-1)/2}\binom{2k}{k}\frac{\bigl(\beta(1-\beta)\bigr)^k}{k^2}
\equiv
2\pounds_2\left(\beta\right)
+2\pounds_2\left(1-\beta\right)
\pmod{(\beta^p,p)}
\]
in $\Z_p[\beta]$.
Now both sides are polynomials of degree less than $p$, and
hence the congruence actually holds modulo $p$ as well,
which is the desired conclusion.

In an entirely similar manner one obtains Equation~\eqref{N3} from Equation~\eqref{S4},
again using Equation~\eqref{eq:Mirimanoff} to get rid of $\pounds_1(\beta)^3$
in favour of $\pounds_3(\alpha)$.
\end{proof}

According to need, the congruences of Theorem~\ref{thm:2kk} can be viewed as identities in the polynomial ring
$\F_p[\beta]$ or in the power series ring $\F_p[[x]]$, with $\beta$
the power series defined by Equation~\eqref{eq:beta}.
Taking the former viewpoint, the left-hand sides belong to the subfield $\F_p(x)$ of the field
$\F_p(\beta)$ of rational expressions fixed by the automorphism which interchanges $\beta$ and $\alpha=1-\beta$.
This invariance is emphasized by the form in which we have written the right-hand sides.
Viewing the congruences as between polynomials in $\beta$, rather than power series in $x$,
has the advantage of allowing evaluation on algebraic integers, as we will exemplify in Section~\ref{sec:applications}.

Next we present two further congruences of level one, which are finite analogues of Equations~\eqref{B1} and~\eqref{B5} found by Boyadzhev.

\begin{theorem}\label{thm:2kkHk}
Let $p$ be an odd prime.
The following congruences hold in the polynomial ring $\Z_p[\beta]$, where $x=\beta(1-\beta)$ and $\alpha=1-\beta$:
\begin{equation}\label{M3}
\sum_{k=1}^{p-1}\binom{2k}{k}H_kx^k
\equiv
-2(\alpha-\beta)^{p-1}\pounds_1\left(\frac{\beta}{\beta-\alpha}\right)
\pmod{p},
\end{equation}
\begin{equation}\label{CM3}
\sum_{k=1}^{p-1}C_kH_kx^{k+1}
\equiv
\pounds_1(\beta)
+(\alpha-\beta)^{p+1}\pounds_1\left(\frac{\beta}{\beta-\alpha}\right)
\pmod{p}.
\end{equation}
\end{theorem}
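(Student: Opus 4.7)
The plan is to deduce~\eqref{M3} and~\eqref{CM3} by truncating and reducing modulo $p$ the generating series identities~\eqref{B1} and~\eqref{B5}, working in the ring $\Z_p[[\beta]]$ with $x=\beta(1-\beta)$ and $\alpha=1-\beta$, so that $\alpha-\beta=1-2\beta$. A crucial observation is that the rational function $\beta/(2\beta-1)=-\beta/(1-2\beta)$ has $\beta$-order exactly $1$ when expanded as a power series, so its $p$-th power has $\beta$-order $p$.

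First, I replace $\Li_1$ by $\pounds_1$ in the right-hand sides of~\eqref{B1} and~\eqref{B5}. Since $\Li_1(y)-\pounds_1(y)=\sum_{k\ge p}y^k/k$ is divisible by $y^p$, the difference contributes a power series in $\beta$ of order at least $p$. After verifying $p$-integrality of the resulting expressions (the denominators $1/k$ for $0<k<p$ and the geometric expansions of $1/(1-2\beta)^k$ are unproblematic, and the truncated left-hand sides only involve $H_k$ with $k<p$), the identities descend to congruences modulo $(\beta^p,p)$ in $\Z_p[[\beta]]$. Next, I convert the remaining rational functions into polynomials using the Fermat-style congruence $(1-2\beta)^p\equiv 1-(2\beta)^p\equiv 1\pmod{(\beta^p,p)}$: multiplying $\pounds_1(\beta/(2\beta-1))$ by the congruent substitutions $(1-2\beta)^{p-1}\equiv 1/(1-2\beta)$ and $(1-2\beta)^{p+1}\equiv(1-2\beta)$ modulo $(\beta^p,p)$ yields the polynomial right-hand sides of~\eqref{M3} and~\eqref{CM3} respectively. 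At this point both sides of each congruence are honest polynomials in $\Z_p[\beta]$, congruent modulo $(\beta^p,p)$.

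The remaining task is to promote congruence modulo $(\beta^p,p)$ to congruence modulo $p$. For~\eqref{M3}, both polynomials have degree at most $p-1$ (on the left because $\binom{2k}{k}\equiv 0\pmod p$ for $(p-1)/2<k<p$), so any polynomial of degree less than $p$ divisible by $\beta^p$ must vanish. The main obstacle is~\eqref{CM3}, where the degrees are $p+1$ (using also $H_{p-1}\equiv 0\pmod p$ to discard the $k=p-1$ term on the left), so the preceding argument leaves the $\beta^p$ and $\beta^{p+1}$ coefficients undetermined. To close this gap, I exploit the symmetry $\beta\leftrightarrow 1-\beta$: the left-hand side is manifestly invariant, and the right-hand side is invariant modulo $p$ because $\pounds_1(\beta/(2\beta-1))\equiv\pounds_1((1-\beta)/(1-2\beta))\pmod p$ by Equation~\eqref{eq:L^1} applied to $y=\beta/(2\beta-1)$ (since $1-y=(1-\beta)/(1-2\beta)$), while $(1-2\beta)^{p+1}$ is fixed since $p+1$ is even. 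The difference of the two sides modulo $p$ then has the form $\beta^p(a+b\beta)$ and is symmetric; using $(1-\beta)^p\equiv 1-\beta^p\pmod p$, comparing coefficients of $\beta^0$ and $\beta^1$ in the symmetry relation forces $a\equiv b\equiv 0\pmod p$.
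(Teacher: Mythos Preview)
Your proof is correct, and it takes a genuinely different route from the paper's.

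For~\eqref{M3}, the paper starts from the form $2\Li_1(2\beta)-2\Li_1(\beta)$ of~\eqref{B1}, multiplies through by $1-2\beta$, and then must compute the stray degree-$p$ term on the left (which turns out to be $-2\pounds_1(-1)\beta^p$) before invoking the four-term functional equation~\eqref{eq:4-term} to recast the result in the desired shape. You instead use the form $\frac{-2}{1-2\beta}\Li_1\bigl(\beta/(2\beta-1)\bigr)$ directly, and your Fermat trick $(1-2\beta)^p\equiv 1\pmod{(\beta^p,p)}$ converts the denominator into $(1-2\beta)^{p-1}$ in one stroke; the resulting polynomial already has degree at most $p-1$, so no extra term needs to be tracked and the four-term identity is never used. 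For~\eqref{CM3}, the paper integrates~\eqref{M3} with respect to $\beta$, working out several derivatives of $\pounds_0$ and $\pounds_1$ at $\beta/(2\beta-1)$ and fixing the constant of integration by evaluating at $\beta=1$. You instead truncate~\eqref{B5} directly and close the two undetermined top-degree coefficients via the $\beta\leftrightarrow 1-\beta$ symmetry, which is a clean linear-algebra argument. Your approach is more uniform and more elementary; the paper's route, by contrast, showcases the four-term identity it has just established and the differentiation/integration link between the $\binom{2k}{k}$ and $C_k$ series.
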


The right-hand sides of Equations~\eqref{M3} and~\eqref{CM3} are actually polynomials, once $\pounds_1$ is expanded into a sum
and cancellation with the power of $\alpha-\beta$ takes place.
Also, invariance under interchanging $\alpha$ and $\beta$ holds because
$\pounds_1\bigl(\alpha/(\alpha-\beta)\bigr)
\equiv
\pounds_1\bigl(\beta/(\beta-\alpha)\bigr)
\pmod{p}$
and
$\pounds_1(\alpha)=\pounds_1(\beta)$,
according to Equation~\eqref{eq:L^1}.

Note that Equations~\eqref{M3} and~\eqref{CM3} relate to their infinite counterparts,
Equations~\eqref{B1} and~\eqref{B5},
much in the same way as the congruences given in Theorems~\ref{thm:central_pol} and~\ref{thm:Catalan_pol} relate to their infinite counterparts,
the generating functions of the central binomial coefficients and the Catalan numbers.

\begin{proof}
One may obtain Equation~\eqref{M3} from Equation~\eqref{M1} by means of the transform given in Equation~\eqref{eq:transform_truncated},
in the same way as in Section~\ref{sec:series} we deduced Equation~\eqref{B1} from Equation~\eqref{B4} using the transform of Equation~\eqref{eq:transform}.
However, keeping with the spirit of this paper we rather deduce the congruence from the corresponding identity of power series,
Equation~\eqref{B1}.

After rewriting Equation~\eqref{B1} in the equivalent form
\[
(1-2\beta)\sum_{k=1}^{\infty}\binom{2k}{k}H_k\bigl(\beta(1-\beta)\bigr)^k
=
2\Li_1(2\beta)-2\Li_1(\beta)
\]
in the power series ring $\Q[[\beta]]$, we infer the congruence
\[
(1-2\beta)\sum_{k=1}^{(p-1)/2}\binom{2k}{k}H_k\bigl(\beta(1-\beta)\bigr)^k
\equiv
2\pounds_1(2\beta)-2\pounds_1(\beta)
\pmod{\beta^p},
\]
which now takes place in the polynomial ring $\Q[\beta]$.
The right-hand side is a polynomial of degree less than $p$, while the left-hand side
is a polynomial of degree not exceeding $p$.
Its term of degree $p$ is
\[
-2\beta\binom{p-1}{(p-1)/2}H_{(p-1)/2}(-\beta^2)^{(p-1)/2}
\equiv
-2H_{(p-1)/2}\beta^p
\equiv
-2\pounds_1(-1)\beta^p
\pmod{p},
\]
because
\[
H_{(p-1)/2}=2\sum_{k=1}^{p-1}\frac{1}{2k}=\pounds_1(1)+\pounds_1(-1)\equiv\pounds_1(-1)\pmod{p}.
\]
Consequently, reduction modulo $p$ yields the polynomial congruence
\[
(1-2\beta)\sum_{k=1}^{(p-1)/2}\binom{2k}{k}H_k\bigl(\beta(1-\beta)\bigr)^k
\equiv
2\pounds_1(2\beta)-2\pounds_1(\beta)
-2\pounds_1(-1)\beta^p
\pmod{p}.
\]

To turn this congruence into the more symmetric form stated in the theorem,
note that $\pounds(-1)=\pounds_1(2)=-2\pounds_1(1/2)$,
apply Equations~\eqref{eq:4-term} and~\eqref{eq:L^1} to transform half the right-hand side into
\begin{align*}
\pounds_1(2\beta)-\pounds_1(\beta)
+2\beta^p\pounds_1(1/2)
&\equiv
-(1-2\beta)^p\pounds_1\left(\frac{1-\beta}{1-2\beta}\right)
\pmod{p}
\\&\equiv
(2\beta-1)^p\pounds_1\left(\frac{\beta}{2\beta-1}\right)
\pmod{p},
\end{align*}
and finally divide both sides of the resulting congruence by $1-2\beta$.

Like Equation~\eqref{M3}, Equation~\eqref{CM3} may be obtained in several ways,
one of which is deducing it from the generating function for the corresponding series,
Equation~\eqref{B5}.
For a change, we will obtain Equation~\eqref{CM3} from Equation~\eqref{M3} through integration, noting that
\[
\frac{\mathrm{d}}{\mathrm{d}\beta}
\sum_{k=1}^{p-1}C_kH_kx^{k+1}
=
(1-2\beta)
\sum_{k=1}^{p-1}\binom{2k}{k}H_kx^k.
\]
Because of the identities
\[
x\cdot\frac{\mathrm{d}}{\mathrm{d}\beta}\pounds_1(x)
=\pounds_{0}(x)
=\frac{x-x^p}{1-x}
\]
we have
\[
(2\beta-1)^p\pounds_0\left(\frac{\beta}{2\beta-1}\right)
=
\frac{\beta(2\beta-1)^p-\beta^p(2\beta-1)}
{(\beta-1)}
\equiv
\frac{\beta-\beta^p}
{(1-\beta)}
=
\pounds_0(\beta)
\pmod{p}
\]
and, consequently,
\[
(1-2\beta)^{p+1}\cdot\frac{\mathrm{d}}{\mathrm{d}\beta}\pounds_1\left(\frac{\beta}{2\beta-1}\right)
=
\frac{(1-2\beta)^p}{\beta}
\cdot\pounds_0\left(\frac{\beta}{2\beta-1}\right)
\equiv
-\pounds_0(\beta)
\pmod{p}.
\]
It follows that
\begin{align*}
&
\frac{\mathrm{d}}{\mathrm{d}\beta}\left(
\pounds_1(\beta)+(1-2\beta)^{p+1}\pounds_1\left(\frac{\beta}{2\beta-1}\right)
\right)
\\&\qquad\equiv
\frac{\pounds_{0}(\beta)}{\beta}
+(1-2\beta)^p\cdot\frac{\mathrm{d}}{\mathrm{d}\beta}\left(
(1-2\beta)\pounds_1\left(\frac{\beta}{2\beta-1}\right)
\right)
\pmod{p}
\\&\qquad\equiv
-2(1-2\beta)^p\pounds_1\left(\frac{\beta}{2\beta-1}\right)
\pmod{p}.
\end{align*}
This proves that the difference of the two sides of Equation~\eqref{CM3}, which is clearly a polynomial in $\Z_p[\beta]$
with no constant term and of degree not exceeding $p+1$,
has zero derivative modulo $p$.
Hence that difference can be taken to be an integral multiple of $p\beta^p$.
However, because both sides of the congruence vanish for $\beta=1$, that difference must be the zero polynomial modulo $p$.
\end{proof}

Moving on to congruences of level two, we have already produced the simplest one, which is Equation~\eqref{N2}.
We conclude this section with establishing three further congruences of level two.
The first two are analogues of Equations~\eqref{S2} and~\eqref{S3}.

\begin{theorem}
Let $p>3$ be a prime.
The following congruences hold in the polynomial ring $\Z_p[\beta]$, where $x=\beta(1-\beta)$ and $\alpha=1-\beta$:
\begin{equation}\label{C10}
\sum_{k=1}^{p-1} \binom{2k}{k}H_k^{(2)} x^{k}
\equiv
\frac{2\pounds_2(\alpha)-2\pounds_2(\beta)}{\beta-\alpha}
\pmod{p},
\end{equation}

\begin{equation}\label{C11}
\sum_{k=1}^{p-1}C_k H_k^{(2)}x^{k+1}
\equiv 2\alpha\pounds_2(\alpha)+2\beta\pounds_2(\beta)
\pmod{p}.
\end{equation}
\end{theorem}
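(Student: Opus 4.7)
The plan is to deduce both congruences from their closed-form generating series, Equations~\eqref{S2} and~\eqref{S3}, following the same truncation-and-reduction recipe used in Theorem~\ref{thm:2kk}. The one additional wrinkle is that a careful degree accounting will be needed at the end to upgrade a congruence modulo $(\beta^p,p)$ to one modulo $p$.

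For Equation~\eqref{C10}, I would first clear the denominator in~\eqref{S2} and substitute $x=\beta(1-\beta)$, obtaining the identity
\[
(1-2\beta)\sum_{k=1}^{\infty}\binom{2k}{k}H_k^{(2)}\bigl(\beta(1-\beta)\bigr)^k = 2\Li_2(\beta)+\Li_1(\beta)^2
\]
in $\Q[[\beta]]$. Reduction modulo $\beta^p$ truncates the sum to $k=1,\ldots,p-1$ and replaces $\Li_d$ by $\pounds_d$ on the right. One application of Equation~\eqref{eq:Mirimanoff} with $d=2$ then converts $\pounds_1(\beta)^2$ into $-2\pounds_2(\alpha)$, yielding
\[
(1-2\beta)\sum_{k=1}^{p-1}\binom{2k}{k}H_k^{(2)}\bigl(\beta(1-\beta)\bigr)^k \equiv 2\pounds_2(\beta)-2\pounds_2(\alpha) \pmod{(\beta^p,p)}.
\]
Once this has been upgraded to a congruence modulo $p$ (see below), dividing through by the unit $1-2\beta=-(\beta-\alpha)$ as a formal power series and rearranging gives exactly the right-hand side of Equation~\eqref{C10}.

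For Equation~\eqref{C11} I would apply the same procedure directly to~\eqref{S3}, with no denominator to clear. Truncation and a single use of Equation~\eqref{eq:Mirimanoff} to turn $-(1-\beta)\pounds_1(\beta)^2$ into $2\alpha\pounds_2(\alpha)$ immediately produce
\[
\sum_{k=1}^{p-1}C_kH_k^{(2)}\bigl(\beta(1-\beta)\bigr)^{k+1} \equiv 2\beta\pounds_2(\beta)+2\alpha\pounds_2(\alpha) \pmod{(\beta^p,p)},
\]
precisely the desired right-hand side of~\eqref{C11}.

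The main obstacle in both parts is the final upgrade of each congruence from modulo $(\beta^p,p)$ to modulo $p$, which requires verifying that the offending coefficients of $\beta^p$ vanish. For~\eqref{C10}, after clearing the denominator the left side has potential degree $p$; only the term $k=(p-1)/2$ contributes to $\beta^p$, and a short computation evaluates that coefficient as a nonzero constant times $H_{(p-1)/2}^{(2)}$, which vanishes modulo $p$ for $p>3$ by combining Wolstenholme's congruence $H_{p-1}^{(2)}\equiv 0\pmod p$ with the symmetry $j\mapsto p-j$. For~\eqref{C11}, on the left the only values of $k$ with $C_k\not\equiv 0\pmod p$ and degree at least $p$ are $k=(p-1)/2$ and $k=p-1$, and in both of these $H_k^{(2)}\equiv 0\pmod p$; on the right, the $\beta^p$-contributions of $2\beta\pounds_2(\beta)$ and $2\alpha\pounds_2(\alpha)$ are $2/(p-1)^2$ and $-2/(p-1)^2$, which cancel. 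Once these cancellations are in place, both sides of each congruence are polynomials of degree less than $p$, and so the congruence modulo $\beta^p$ upgrades automatically to one modulo $p$.
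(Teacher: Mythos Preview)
Your argument is essentially the paper's own: truncate the generating series~\eqref{S2} and~\eqref{S3}, apply Mirimanoff's congruence~\eqref{eq:Mirimanoff} to convert $\pounds_1(\beta)^2$ into $-2\pounds_2(\alpha)$, and then upgrade from $(\beta^p,p)$ to $p$ by degree accounting. The only tactical difference is in the upgrade step for~\eqref{C11}: the paper works modulo $(\beta^{p+1},p)$, using the sharper form~\eqref{eq:L^2} together with $\pounds_2(1)\equiv 0$ so that both sides already have degree at most $p$, whereas you work modulo $(\beta^p,p)$ and instead verify directly that all $\beta^j$-coefficients with $j\ge p$ cancel (via $H_{(p-1)/2}^{(2)}\equiv H_{p-1}^{(2)}\equiv 0$ on the left and the explicit $\pm 2/(p-1)^2$ cancellation on the right); both routes are valid and yield the same conclusion.
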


\begin{proof}
We will deduce the congruence in Equation~\eqref{C10}
from the corresponding identity in Equation~\eqref{S2}.
Thus, we start with rewriting that in the equivalent form
\[
(1-2\beta)\sum_{k=1}^{\infty}\binom{2k}{k}H_k^{(2)} \bigl(\beta(1-\beta)\bigr)^k
=
2\Li_2(\beta)+\Li_1(\beta)^2
\]
in the power series ring $\Q[[\beta]]$.
Truncating the series and using Equation~\eqref{eq:Mirimanoff} we deduce the congruence
\[
(1-2\beta)\sum_{k=1}^{(p-1)/2}\binom{2k}{k}H_k^{(2)} \bigl(\beta(1-\beta)\bigr)^k
=
2\pounds_2(\beta)-2\pounds_2(1-\beta)
\pmod{(\beta^p,p)}
\]
in $\Z_p[\beta]$.
Now both sides are polynomials of degree less than $p$, and
hence the congruence actually holds modulo $p$ as well.
Equation~\eqref{C10} follows after dividing by $1-2\beta$ and recalling
that $\binom{2k}{k}$ vanishes modulo $p$ for $p/2<k<p$.

In a similar fashion, we will deduce Equation~\eqref{C11} from Equation~\eqref{S3}.
After rewriting the latter as
\[
\sum_{k=1}^{\infty}C_kH_k^{(2)}\bigl(\beta(1-\beta)\bigr)^{k+1}
=
2\beta\Li_2(\beta)-(1-\beta)\Li_1(\beta)^2
\]
in the formal power series ring $\Q[[\beta]]$,
because $\Li_d(\beta)/\beta\equiv\pounds_d(\beta)/\beta\pmod{\beta^{p-1}}$
we deduce the congruence
\[
\sum_{k=1}^{(p-1)/2}C_kH_k^{(2)}\bigl(\beta(1-\beta)\bigr)^{k+1}
\equiv
2\beta\pounds_2(\beta)-(1-\beta)\pounds_1(\beta)^2
\pmod{\beta^{p+1}}.
\]
Because both sides are polynomials with $p$-integral rational coefficients, they can be viewed modulo $p$.
Now Equation~\eqref{eq:Mirimanoff} only yields
$\pounds_1(\beta)^2\equiv 2\pounds_2(1-\beta)\pmod{(\beta^p,p)}$.
However, this congruence actually holds modulo $(\beta^{p+1},p)$,
because of the more precise Equation~\eqref{eq:L^2} together with the fact that
$\pounds_2(1)\equiv 0\pmod{p}$.
(Alternatively, use the sharper version of Equation~\eqref{eq:Mirimanoff} given
in~\cite[Lemma~3.2]{MatTau:polylog} and the fact that the Bernoulli number $B_{p-2}$
vanishes, again for $p>3$.)
Thus, our congruence takes the form
\[
\sum_{k=1}^{(p-1)/2}C_kH_k^{(2)}\bigl(\beta(1-\beta)\bigr)^{k+1}
\equiv
2\beta\pounds_2(\beta)+2(1-\beta)\pounds_2(1-\beta)
\pmod{(\beta^{p+1},p)}
\]
in $\Z_p[\beta]$.
Because both sides are polynomials of degree not exceeding $p$, this congruence actually holds modulo $p$, rather than just modulo $(\beta^{p+1},p)$.
Finally, because $C_k\equiv 0\pmod{p}$ for $p/2<k<p-1$, and $H_{p-1}(2)\equiv 0\pmod{p}$,
we can extend the summation range to $0<k<p$, and our congruence takes the desired final form.
\end{proof}

Another congruence of level two is an analogue of Equation~\eqref{B2}, namely,
\begin{equation}\label{T}
\sum_{k=1}^{p-1}\binom{2k}{k}\frac{H_k}{k}x^k
\equiv
2(\alpha-\beta)^{p}\left(
\pounds_2\left(\frac{\alpha}{\alpha-\beta}\right)
-\pounds_2\left(\frac{\beta}{\beta-\alpha}\right)
\right)
\pmod{p},
\end{equation}
also to be read in the polynomial ring $\Z_p[\beta]$, where $x=\beta(1-\beta)$ and $\alpha=1-\beta$, for $p>2$.
Note that invariance of the right-hand side under interchanging $\alpha$ and $\beta$ is manifest in this case.
It may be possible to deduce Equation~\eqref{T} from Equation~\eqref{B2} by truncation using similar arguments to those used so far,
but it seems simpler to obtain it from Equation~\eqref{C10} by means of the sequence transform of Equation~\eqref{eq:transform_truncated}.
In fact, this procedure is followed in~\cite{Tauraso:Fibonacci}, where Equation~\eqref{T}
appears as~\cite[Equation~(13)]{Tauraso:Fibonacci}.
The same paper contains a similar derivation of our Equation~\eqref{M3}, which appears there as~\cite[Equation~(12)]{Tauraso:Fibonacci}.

\section{Some applications to numerical congruences}\label{sec:applications}

In this last section we present some examples of interesting numerical congruences which can be obtained by evaluating some of
our polynomial congruences to particular values of $x$.
We limit ourselves to a selection of the most elegant ones.

In Equation~\eqref{S4} we have found a closed form for the sum of the two series
$\sum_{k=1}^{\infty}
\binom{2k}{k}H_k^{(2)}x^k/k$
and
$\sum_{k=1}^{\infty}
\binom{2k}{k}x^k/k^3$,
but it may not be possible to do so for the individual series.
However, closed forms modulo $p$ can be found for the two analogous finite sums,
thus refining Equation~\eqref{N3}.
In fact, an evaluation modulo $p$ of the former was essentially found in~\cite{MatTau:polylog}, namely,
\begin{equation}\label{C12}
\sum_{k=1}^{p-1} \binom{2k}{k}\frac{H_k^{(2)}}{k} x^{k}
\equiv
-2x^p(\pounds_3(1-1/\alpha)+\pounds_3(1-1/\beta))
\pmod{p},
\end{equation}
for $p>3$.
To see this, according to~\cite[Equation~(38)]{MatTau:polylog} we have
\[
\sum_{k=1}^{p-1} \binom{2k}{k}\frac{H_k^{(2)}}{k} x^{k}
\equiv
-2x^p
\sum_{k=1}^{p-1} \frac{v_k(2-1/x)}{k^3}
\pmod{p},
\]
where
$
\sum_{k=1}^{p-1} v_k(y)/k^d
=\pounds_d(\gamma)+\pounds_d(\gamma^{-1})
$
with
$\gamma^2-y\gamma+1=0$,
as described in~\cite[Section~5]{MatTau:polylog} with slight notational changes.
Because the roots of the quadratic equation
$\gamma^2-(2-1/x)\gamma+1=0$
are precisely our $1-1/\alpha$ and $1-1/\beta$,
Equation~\eqref{C12} follows.
Now, subtracting Equation~\eqref{C12} from Equation~\eqref{N3} we find
\begin{equation}\label{C13}
\sum_{k=1}^{p-1} \binom{2k}{k}\frac{x^{k}}{k^3}
\equiv
4\pounds_3(\alpha)+4\pounds_3(\beta)
+2x^p(\pounds_3(1-1/\alpha)+\pounds_3(1-1/\beta))
\pmod{p}.
\end{equation}

Equation~\eqref{C13} is noteworthy because it had appeared unaccessible to the methods of~\cite{MatTau:polylog},
where we only obtained versions with lower powers of $k$ at the denominator in place of $k^3$, see~\cite[Theorem~7.1]{MatTau:polylog}.
Thus, we can now supplement the sample numerical congruences given in~\cite[Section~8]{MatTau:polylog}
with some obtained through specializations of the polynomial congruence of Equation~\eqref{C13}, such as
\begin{align}
&\label{eq:num1}
\sum_{k=1}^{p-1}\binom{2k}{k}\frac{1}{k^3}\equiv
\frac{2B_{p-3}}{3}\pmod{p},
\\&\label{eq:num2}
\sum_{k=1}^{p-1}\binom{2k}{k}\frac{(-2)^k}{k^3}\equiv
-\frac{8q_p(2)^3+31B_{p-3}}{12}\pmod{p},
\\&\label{eq:num3}
\sum_{k=1}^{p-1}\binom{2k}{k}\frac{1}{4^k k^3}\equiv
\frac{4q_p(2)^3+2B_{p-3}}{3}\pmod{p},
\\&\label{eq:num4}
\sum_{k=1}^{p-1}\frac{(-1)^k}{k^3}
\binom{2k}{k}\left(L_{3k}-1\right)\equiv
-\frac{3}{5}\left(q_L^3+2B_{p-3}\right)\pmod{p};
\end{align}
the first three congruences hold for any prime $p>3$, and the fourth one requires $p>5$.
Here $B_n$ denotes a Bernoulli number, $q_p(2)=(2^{p-1}-1)/2$ is a Fermat quotient,
$L_n$ denotes a Lucas number,
and $q_L=(L_p-1)/p$ is a Lucas quotient.
Equation~\eqref{eq:num1} appeared in~\cite[Theorem 2]{HP12},
but the others appear to be new.
For the reader's convenience, the following table lists the required values of $\alpha$, $\beta$, and the related quantities which appear in Equation~\eqref{C13}.

\setlength{\extrarowheight}{5pt}
\begin{center}
\begin{tabular}{|c|c|c|c|c|}
\hline
$x$ & $\aa$ & $\bb$ &$1-1/\aa$ &$1-1/\bb$\\
\hline
$1$ & $\omega_6$ & $\omega_6^{-1}$ & $\omega_6$ & $\omega_6^{-1}$
\\
$-2$ & $2$ & $-1$& $1/2$ & $2$
\\
$1/4$ & $1/2$ & $1/2$ & $-1$ & $-1$
\\
$-1$ & $\phi_+$ & $\phi_-$ & $\phi_-^2$ & $\phi_+^2$
\\
$-\phi^3_{+}$ & $\phi_+^2$ &$-\phi_+$ &$-\phi_-$ &$\phi_+$
\\
$-\phi^3_{-}$ & $-\phi_-$ & $\phi_-^2$ &$\phi_-$ &$-\phi_+$
\\[3pt] \hline
\end{tabular}\end{center}
Here $\omega_6$ is a primitive complex sixth root of unity, and $\phi_{\pm}=(1\pm\sqrt{5})/2$.
Equations~\eqref{eq:num1}, \eqref{eq:num2}, and~\eqref{eq:num3} follow at once by appropriately evaluating Equation~\eqref{C13}
and using the congruences produced in~\cite[Section~4]{MatTau:polylog}
for the required values of $\pounds_3$.
Equations~\eqref{eq:num4} is slightly more complicated and requires three evaluations of Equation~\eqref{C13},
corresponding to the three last rows of the above table.
In fact, because $L_{n}=\phi^{n}_{+}+\phi^{n}_{-}$ the left-hand side of Equation~\eqref{eq:num4} can be written as
\[
\sum_{k=1}^{p-1}\frac{(-\phi^3_+)^k}{k^3}\binom{2k}{k}
+\sum_{k=1}^{p-1}\frac{(-\phi^3_-)^k}{k^3}\binom{2k}{k}
-\sum_{k=1}^{p-1}\frac{(-1)^k}{k^3}\binom{2k}{k}.
\]
According to Equation~\eqref{C13} this is congruent, modulo $p$, to a certain linear combination of values of $\pounds_3$.
The desired conclusion follows from congruences for some of those values given in~\cite[Theorem~4.4]{MatTau:polylog}
and an application of standard congruences for polylogarithms recalled in~\cite[Section~2]{MatTau:polylog}.
Unfortunately, we are not able to supplement Equations~\eqref{eq:num1}--\eqref{eq:num4} with a similar evaluation of the sum
$\sum_{k=1}^{p-1}\bigl((-1)^k/k^3\bigr)\binom{2k}{k}$,
for example, because from~\cite{MatTau:polylog} we know evaluations modulo $p$ of $\pounds_3(\phi_{\pm}^2)$, but not of $\pounds_3(\phi_{\pm})$.

\bibliography{References}
\def\cprime{$'$} \def\polhk#1{\setbox0=\hbox{#1}{\ooalign{\hidewidth
  \lower1.5ex\hbox{`}\hidewidth\crcr\unhbox0}}}

\vspace{0.5cm}
\end{document}